\documentclass[12pt,twoside]{article}
\usepackage[dvips]{graphics}
\usepackage{amsmath,amsfonts,amssymb,amsthm}
\usepackage{epsfig,fullpage}
\usepackage{xspace}
\usepackage{color}

\thispagestyle{empty}

\newcommand{\dt}{\Delta t}

\newtheorem{thrm}{Theorem}

\newtheorem{algorithm}{Algorithm}
\newcommand{\uh}{U}

\newtheorem{theorem}{Proposition}

\def\N{\mathbb N}
\def\Z{\mathbb Z}
\def\R{\mathbb R}

\begin{document}
\title{Fast and Efficient Numerical Methods for an Extended  Black-Scholes Model}
\author{Samir Kumar Bhowmik~\footnote{The author would like to thank Chris C. Stolk, the KdV institute for Mathematics, University of Amsterdam for introducing him wavelet and Fourier sine preconditioners for elliptic operators.}\\
Department of Mathematics, University of Dhaka, Dhaka 1000, Bangladesh\\
Bhowmiksk@gmail.com}

\maketitle

\begin{abstract}
 An efficient linear solver plays an important role while solving partial differential equations (PDEs) and partial integro-differential equations (PIDEs) type mathematical models. In most cases, the efficiency depends on the stability and accuracy of the numerical scheme considered. In this article we consider a PIDE that arises in option pricing theory (financial  problems) as well as in various scientific modeling
 and deal with two different topics. In the first part of the article, we study   several iterative techniques (preconditioned) for the PIDE model. A wavelet basis  and a Fourier sine basis have been used to design various  preconditioners to improve the convergence criteria of iterative solvers. We implement a  multigrid (MG) iterative method.  In fact, we approximate the problem using a finite difference scheme, then implement a few  preconditioned Krylov subspace methods as well as a MG method to speed up the computation. Then, in the second part in this study, we  analyze  the stability and the accuracy of two different one step schemes to approximate the model.

\end{abstract}

\textbf{Keywords:} convolutional integral; preconditioner; stability; convergence.

\section{Introduction}
The pricing of options is a central problem in financial investment. It is important in both theoretical and practical point of view since the use of options thrives in the financial market. In option pricing theory, the study of the Black-Scholes equation is very important and interesting (study of  a parabolic partial differential equation (PDE)). In recent days, researchers have extended the model by looking at the nonlocal effects, which is a linear partial integro-differential equation (PIDE).

We consider such a partial integro-differential equation~\cite{rcontevoltchkova2005, D.J.Duffy03}
\begin{equation}\label{f:generalmodel01}
 \frac{\partial u(x, t)}{\partial t} = \mathcal{L}u(x, t),
 \end{equation}
 where
 \[
  \mathcal{L} u(x, t) = \sigma  \frac{\partial^2 u(x, t)}{\partial x^2}+ \mu\frac{\partial u(x, t)}{\partial x}- r u(x, t)
 + \lambda  \int_{\Omega}\ J(x-y)\left( u(y,t)-u(x,t)
   \right)dy,
\]
with initial condition
\[
u(x, 0) =\psi(x), \ -\infty< x <\infty.
\]
Here $\sigma \ge 0$, $\lambda \ge 0$, with  $(\sigma, \lambda) \ne (0, 0)$, $ r \ge 0$ and $\mu \in \R$, $J$ is the kernel of the model and $u=u(x, t)$ represents the option price (contingent claim).
A normalized kernel function $J(x)$, i.e., $\int_{\Omega} J(x)dx=1$ has been considered in most of the
models~\cite{Dug, F.Fiorani, J.Medlock} with suitable parameter values.
In general, $J(x-y)$ is a kernel function that models  the interaction between options at positions $x$ and  $y$. The effect of close neighbours
$x$ and $y$ is usually greater than that from more distant ones;
this is incorporated in $J$.  For simplicity we assume that $J$ is a non-negative function that satisfies smoothness, symmetry and decay conditions. One may consider any $J$ to implement the schemes we discuss in this study.
$\frac{1}{2}e^{-|x|}$ and $\sqrt{\frac{\omega}\pi}e^{-\omega x^2}$ are two sample kernel functions.
Boundary conditions are always an issue in these types of models. Here one may easily consider BCs~\cite{D.J.Duffy03}
\[
\frac{\partial^2 u}{\partial x^2} = 0, \ \text{as } \ x\rightarrow \pm \infty.
\]

Operator defined by (\ref{f:generalmodel01}) with $\sigma=0, \mu=0, \ r=0$
comes while modeling phase transitions~\cite{Dug}, dynamics of neurons in the
brain model~\cite{Gab01, C.Chow}, and population dynamics models~\cite{J.Medlock} as well.

Numerical approximation and analysis  of PDEs and PIDEs using finite difference, finite element method and the pseudo-spectral method are of  ongoing research interest. Specially for PIDEs, fast and efficient numerical tools are still to be developed.
A clear introduction about option pricing models and some finite difference schemes to approximate the models can be found in \cite{D.J.Duffy03, F.Fiorani}. 

A noble study about the model problem  \eqref{f:generalmodel01} can be found in \cite{F.Fiorani}. The authors consider a European and an American vanilla and barrier options based on the variance gamma process. They discuss derivation of  \eqref{f:generalmodel01} in detail and approximate the model problem numerically  by implementing a finite difference algorithm. They present some numerical experiments on the option pricing. But no efficient linear algebra solvers for the discrete equivalent of the model as well as the  stability and the accuracy analysis of the approximation are discussed.

In \cite{Dug}, Dugald et. al. consider a nonlocal model of phase transitions of type (\ref{f:generalmodel01}) ($\sigma=0, \ \mu=0, \ r=0$). Stability of stationary solution and  coarsening of solutions have been discussed by the authors. They present a finite element scheme to solve the problem and discuss some experimental results.

In \cite{SKB02}, the author also considers the nonlocal model of phase
transitions. He approximates the problem using the forward Euler scheme and examines the convergence rate
of the scheme.

A convolutional model of $\dot \theta$ Neuron network has been considered in  \cite{SamirKumarBhowmik04}. The author approximates the problem using finite element method in space, then he applies  implicit schemes for time stepping.  Then the author analyzes the error in such an approximation.

The PIDE model (\ref{f:generalmodel01}) is well studied in \cite{rcontevoltchkova2005}.
They discuss  viscosity   solution of the model followed by  a few finite difference approximations. They show that the infinite domain can be truncated to a finite domain $[A, B]$ where $A$ and $B$ depend  on the decay of the kernel function $J(x)$. Thus the problem can be considered as a IBVPs.
Considering the kernel of the convolution integral as
$$
J(x)\equiv J_{\delta}(x) = \sqrt{\frac{1}{2\pi \delta}} \exp \left(-\frac{y^2}{2\delta^2}\right),
$$
the authors in \cite{rcontevoltchkova2005}   formulate
$$
 A = +\sqrt{-2\delta^2 \log (\delta \varepsilon \sqrt{2\pi})},
$$
and
$
B= -A,
$
where $\varepsilon>0$ is considered so that $J_{\delta}(x) \ge \varepsilon$.
One may consider the model in a spatial periodic domain~\cite{SKB02} as well.
We use these concepts to approximate the model in a finite as well as a periodic spatial domain.

There are many other articles those discuss these type of models, but to the best of our  knowledge the  discussion about efficient linear solvers for this type of models is absent. So we focus on some fast and efficient numerical schemes as well as the stability and the accuracy analysis of two  finite difference schemes for the operator acting on (\ref{f:generalmodel01}).
We start the study by approximating the problem using  the backward Euler in time  for (\ref{f:generalmodel01}) and investigate some linear algebra tools to speed up the computational process in $\Omega\subset \mathbb{R}$. Then we analyze the stability and the accuracy of two different schemes considering $\Omega=\mathbb{R}$.
The article is organized in the following way.  We propose and implement several efficient linear system solvers to compute solutions of (\ref{f:generalmodel01}) in Section~\ref{section01}.
Then we discuss the stability of an explicit and an semi-implicit scheme in Section~\ref{section02}.  We use Fourier transforms of the integro-differential equation  for our analysis throughout this study. The accuracy analysis of two full discrete schemes as well as a  semi-discrete approximation are presented in Section~\ref{f:section04} and Section~\ref{f:semidiscretesection}, respectively.
We finish the article with discussion, conclusions and open problems in Section~\ref{section03}.
%
\section{Numerical approximation}\label{section01}
Several standard ordinary differential equation solvers are available and can be used to approximate the time derivative. So  our main goal, in this study, is to approximate the model \eqref{f:generalmodel01} in space domain.  Here we first perform a time integration, then look for some fast and efficient space integration tools.

Now one may start with the forward Euler scheme for time stepping (an explicit scheme), which uses the values of only previous time step to calculate those of the next. The Algorithm is very simple, in that each unknown, at time step $n+1$, is calculated independently, so it  does not require simultaneous solution of equations, and can even be performed easily. But it is unstable for large time steps. We have analyzed the stability condition, and the accuracy of such a scheme in Section~\ref{section02} and thereafter.
Instabilities are big problems in numerical approximation. We want to use large time steps and so we are interested in using implicit schemes.

\subsection{An  implicit scheme}
We start with the  implicit Euler scheme for time integration.  We approximate the model \eqref{f:generalmodel01} in time by %
\[
 - \Delta t\sigma  \frac{\partial^2 u^n(x)}{\partial x^2} - \Delta t \mu\frac{\partial u^n(x)}{\partial x}+(1 + r\Delta t)u^{n}(x) - \Delta t \lambda  \int_{\Omega}\ J(x-y)\left( u^n(y)-u^n(x)
   \right)dy=u^{n-1}(x),
\]
where $u^{n}(x)=u(x, t_n)$, $n \ge 0$. We will demonstrate several schemes to approximate the semi-discrete spatial model.
For simplicity we write
\begin{equation}\label{f:generalmodel_dt_01}
\mathcal{L}u^n(x)\equiv \mathcal{L}_1(u^n(x)) + \mathcal{L}_2(u^n(x)) =u^{n-1}(x),
\end{equation}
where,
\[
 \mathcal{L}_1(u^n(x)) = -\Delta t\sigma  \frac{\partial^2 u^n(x)}{\partial x^2} -\Delta t \mu\frac{\partial u^n(x)}{\partial x}+(1+r\Delta t)u^{n}(x),
\]
and
\[
\mathcal{L}_2(u^n(x)) =  -\Delta t \lambda  \int_{\Omega}\ J(x-y)\left( u^n(y)-u^n(x)
   \right)dy.
\]
It is easy to verify that the operator $\mathcal{L}$ acting on \eqref{f:generalmodel_dt_01} is an elliptic partial differential operator~\cite{LCEvans1998}.

Now it is our aim to design and implement some fast and efficient solvers for \eqref{f:generalmodel_dt_01}.
We start by approximating
\[
 \frac{\partial^2 u^n(x)}{\partial x^2} = \frac{U^n_{i+1}-2 U^n_i + U^n_{i-1}}{h^2}, \quad
 \frac{\partial u^n(x)}{\partial x} = \frac{U^n_{i+1}- U^n_{i-1} }{2 h},
\]
 and  
\begin{align*}
 \mathcal{L}_1 u^n(x_i) &= \sum_{j=-\infty}^{\infty}\int_{\Omega_i} J(x_i-y)(U^n(x_i)-U^n(y))dy\\
  &\approx
 \sum_{j=-N/2}^{N/2-1}h J(x_i-x_j)(U^n(x_i)-U^n(x_j)).
\end{align*}
Based on these approximations we write the full discrete model as
\begin{equation}\label{f:linearsystem_01}
 AU^n= U^{n-1},
\end{equation}
 which is a system of linear equations with unknown $U^n$.
 The symbol of the discrete equivalent of $\mathcal{L}$ can be written as (see Section~\ref{section02})
 \[
 A_{syb}(\dt, h\xi) =  \dt \left(1-\tilde q(\xi) +\frac{4}{h^2}\sin^2 \left(\frac{h\xi}{2}\right)-\frac{i}{h}\sin (h\xi) \right).
 \]
 Considering $$g(\dt, h\xi)=\frac{1}{ A_{syb}(\dt, h\xi)}$$ the unknown can be expressed in the Fourier domain as (see Section~\ref{section02} for details)
 \[
 \tilde U^n(\xi)= g^n(\dt, h\xi) \tilde U^0(\xi).
 \]
 Since, $$|A_{syb}(\dt, h\xi)|\ge 1$$ for any choice of $\dt$ and $h$, the scheme is unconditionally stable (a few discrete symbols of this type of operators have been evaluated in detail in next section).

Now the main difficulty of solving linear systems like \eqref{f:linearsystem_01}  is that the maximal eigenvalue grows  exponentially whereas the minimal eigenvalue is bounded. This situation results in an exponential growth of the condition number
 \[
 cond(A) = \mathcal{O}(N^2)=\mathcal{O}(2^{2k}),\quad \mbox{ for some $k>1$}.
 \]
 As a result, any iterative solver becomes slower, and a preconditioning is highly needed.
  To be precise for the Krylov subspace type methods, the solution of the linear system  $Au=b$ with some $u_0$ is
  \[
  \|u^{j}-u\|_A \le 2\left(\frac{\sqrt{\rho(A)}-1}{\sqrt{\rho(A)}+1}\right)^j \|u-u^0\|_A,  \ \|x\|_A = x^TAx,
  \]
 where $\rho(A)$ is the spectral condition number of $A$.  The convergence of the above expression is neat, but it has rarely been presented the convergence of conjugate gradient type methods unless $\rho(A)\approx 1$~\cite[page 128]{ke.Chen2005}. Thus it becomes clear that one needs to find a matrix $D$ such that $$B=D^{-1/2}A D^{-1/2}$$ is well conditioned. It is very popular to replace $\rho(A)$ in the iterative solvers by $\rho(B)$, which is called preconditioning~\cite{ke.Chen2005, CCStolk2011} and is used for the preconditioned linear system solvers. Thus we get the motivation to develop and to compare a few preconditioned solvers based on the established and popular preconditioning techniques for local second order elliptic operators.  We implement and demonstrate the power of multigrid, wavelet as well as Fourier preconditioners.
Our goal here is to implement  preconditioners in a traditional way so that
   \begin{enumerate}
   \item $D$ is a symmetric and positive definite matrix.

   \item $\rho(B) = \mathcal{O}(1)$, as $N\rightarrow \infty$.
   \end{enumerate}
To be specific, we discuss several types of preconditioners below. 
\begin{description}
\item[Wavelet Diagonal Preconditioning:]
One of the most successful preconditioners for elliptic PDEs is the wavelet diagonal preconditioning (WDP) which has been studied in details in~\cite{CCStolk2011, KUrban2009}, and many other references.
Since $\mathcal{L}$ is of elliptic type we attempt to implement wavelet diagonal preconditioning to solve \eqref{f:linearsystem_01}.  Suppose $\mathcal{L}$ is defined over a periodic domain. Then a preconditioner can be  defined  by combining  two separate steps:
\begin{enumerate}
    \item Define a basis transformation $\mathcal{F}$ (wavelet decomposition operator), given by a wavelet transformation, and a wavelet reconstruction operator $\mathcal{F}^*$ whose columns are the elements of the wavelet basis denoted by $\psi_\lambda$.

    \item Define an  invertible diagonal scaling matrix $\mathcal{S}$, whose elements are of the form $s_{\lambda}\approx 2^{-2|\lambda|}$, where $|\lambda|$ denotes the scale index of the wavelet.
\end{enumerate}

We consider the symmetric proconditioner $$ D=\mathcal{F}^*\mathcal{S}^{1/2}\mathcal{F}$$ as a scaled operator~\cite{CCStolk2011, KUrban2009}. Then we define the preconditioned operator (equivalent to $\mathcal{L}$ ) by
\[
\mathcal{F}^*\mathcal{S}^{1/2}\mathcal{F} \mathcal{L} \mathcal{F}^*\mathcal{S}^{1/2}\mathcal{F}.
\]
A detailed discussion about designing such a preconditioner can be found in~\cite{CCStolk2011}.
A preconditioner of this type is sensitive with boundary conditions.  One may also consider   $D=\mathcal{F}^*\mathcal{S}\mathcal{F}$ to define a left or a right preconditioner to implement a preconditioned  BICG solver. The implementation detail is same as the symmetric preconditioner discussed above.
\item[Fourier Sine Preconditioning:] Localization in the position-wave number space is an important concept in PDEs and can be extended to PIDEs. Most recently, a frame of functions, called windowed Fourier frames, has been employed to solve a variable coefficient second order elliptic PDE \cite{ccsskb2011}. Here it is our aim to design and to implement preconditioners based on the Fourier sine transformation (FSP) for the PIDE \eqref{f:linearsystem_01}. This preconditioning is sensitive with boundaries, and works very well for  periodic boundary value problems.

The symbol of the operator $\mathcal{L}$ defined by \eqref{f:generalmodel_dt_01} can be written as
 \begin{equation}\label{f:symbol02}
   \Delta t\sigma \xi^2 - \Delta t \mu i\xi + (1 + r\Delta t) - \Delta t \lambda  \sqrt{2\pi}(\hat J(\xi)-\hat J(0)).
 \end{equation}
When  $\xi$ is very large, $\xi^2$ term becomes the dominating term in \eqref{f:symbol02} and so $\mathcal{L}$ can be approximated by $\Delta t\sigma \xi^2$ in the frequency domain.
Thus  we approximate
\[
\mathcal{L}u \approx \dt \sigma \frac{\partial^2}{\partial x^2}u = \sum \dt\sigma \xi_k^2 b_{k} \sin (\xi_k x).
\]
Let $M_k=\dt\sigma \xi_k^2\ne 0$, then
\[
 \frac{1}{M_k} \dt \sigma\frac{\partial^2}{\partial x^2}u \approx \sum_{j\in \N}  b_{n}\sin(\xi_k x).
\]
Thus
\[
\left|\frac{1}{M_k} \dt\sigma\frac{\partial^2}{\partial x^2}u\right| \approx  \left|\sum_{j\in \N}  b_k \sin(\xi_k x) \right|.
\]
Now
\begin{eqnarray*}
\left|\sum_{j\in \N}  b_k \sin(\xi_k x)\right|^2 &\le& \sum_{j\in \N}  |b_k|^2 \le B \|u\|^2
\end{eqnarray*}
where $B$ is the frame upper bound~\cite{S.Mallat2009}.  It is clear from ~\cite{S.Mallat2009} that $B<\infty$ if we consider a tight frame. Thus
\[
 \left|\frac{1}{M_k}\dt \sigma\frac{\partial^2}{\partial x^2}u\right| <\infty,
\quad \text{and so}
\quad
 \left|\frac{1}{M_k}\mathcal{L}u\right| <\infty.
\]

Based on this idea we define a preconditioned operator $$P\mathcal{L}P,$$ with the symmetric preconditioner $$P=F^*M^{-1/2}F,$$ where $F$ stands for the Fourier sine transformation operator~\cite{S.Mallat2009}. The invertibility of the operator $P\mathcal{L}P$, considering $F$ a windowed Fourier transform operator, has been proved in \cite{ccsskb2011} (where $\mathcal{L}$ is a second order elliptic operator). The idea can be extended to the PIDE model that we have considered here. So we avoid attempting to prove the invertibility of  the Fourier sine preconditioner (FSP) $P\mathcal{L}P$ here.
One may also consider  $P=F^*M^{-1}F$ to define a left or a right preconditioner to implement a  preconditioned  BICG solver. The implementation detail is same as the symmetric precomnditioner we have discussed above.

\item[Multigrid Preconditioning:] Multigrid (MG) methods are now a days the fastest and most efficient numerical solvers for linear systems. There are huge recent literatures on MG methods. Actually multigrid method combines two separate ideas~\cite{ke.Chen2005, Cornelius.W.Oosterlee2001}:
    \begin{enumerate}
    \item fine grid residual smoothing by relaxation.
    \item coarse grid residual correction.
    \end{enumerate}
    Here the idea is to perform a few iterations (smoothing) in a fine grid, then switch to a coarser level and perform a few iterations, and so on.  This is called coarse grid corrections. After corrections, one switches back to the fine grid and performs a few post-smoothing.  Thus a multigrid algorithm uses three basic and old steps:
    \begin{itemize}
    \item relaxation step.
    \item restriction step.
    \item interpolation step.
    \end{itemize}
     A detailed discussion about multigrid can be found
    in~\cite{ L.Briggs, ke.Chen2005, Cornelius.W.Oosterlee2001} and in many other references. Since the operator \eqref{f:generalmodel_dt_01}  is of elliptic type, multigrid would be one of the choices to be considered to verify it's efficiency. Here we implement a so-called $v-$cycle to solve the system  (\ref{f:linearsystem_01}). It behaves well with both periodic and non-periodic boundary conditions.

\end{description}

In our problem we use just one $v$-cycle. One can use $\nu$ $v$-cycles if the solution is not
sufficiently accurate after the completion of one cycle.
We follow the Algorithm~\ref{mltgrd:alg} for computation.
\begin{algorithm}{\textbf{Multigrid method to solve system of linear equations}}\label{mltgrd:alg}
\\
To solve the system of linear equations $\mathcal{A}\underline u = \underline f$ using the multigrid method
\begin{description}
\item[INPUT] the finest grid matrix $\mathcal{A}^h,$ right-hand vector $\underline f^h,$ $L$ the number of steps to travel down the coarsest grid, $\mu$ the number of relaxation(iterations) on each grid, tolerance Tol, number of v-cycles $\nu,$ initial solution $\underline u^h = \underline 0$
\item[OUTPUT] The approximate solutions $\underline u^h.$
\item[Step 1] For $\gamma = 1, 2, \cdots, \nu$ or error $<$ Tol do the following steps
\item[Step 2] Relax $\mathcal{A}^h \underline u^h = \underline f^h$ $\mu$ times using the Jacobi iteration with the initial data $\underline u_0^h.$
\item[Step 3] Set $\underline r^h = \underline f^{h} - \mathcal{A}^h\underline u^h.$
\item[Step 4] For $k=2, 3, \cdots, L-1$\\
 define residual $\underline f^{kh} = \underline r^{kh}$,
where $r^{kh}_i=r^{(k-1)h}_{2 i-1}$, $i = 1, 2, \cdots, \frac{N}{k}$,
 take the initial guess $\underline u_0^{kh} = 0,$ and relax $\mathcal{A}^{kh} \underline u^{kh} = \underline f^{kh}$ $\mu$ times as in step 2.
\item[Step 5] Set $\underline f^{Lh} = \underline r^{Lh}$ and solve $\mathcal{A}^{Nh}\underline u^{Lh} =\underline f^{Lh}$ exactly.
\item[Step 6] For $k=L-1, L-2, \cdots, 1$
we upgrade $\underline u^{kh}$ by using
\begin{eqnarray*}
u_{2j-1}^{kh} &\longleftarrow& u_{2j-1}^{kh} + u_j^{(k+1)h};\quad j=1, 2, \cdots, N_{(k+1)h},\\
u_{2j}^{kh} &\longleftarrow& u_{2j}^{kh} + \frac{1}{2}
\left [
u_j^{(k+1)h} + u_{j+1}^{(k+1)h}
\right ];
\quad j=1, 2, \cdots, N_{(k+1)h}-1,\\
u_{2N_{(k+1) h}}^{kh} &\longleftarrow& u_{2N_{(k+1) h}}^{kh} + \frac{1}{2}
\left [
u_1^{(k+1)h} + u_{N_{(k+1) h}}^{(k+1)h}
\right ]
\end{eqnarray*}
 and upgrade the solution $\mu$ times as in step 2.
\item[Step 7] If $\|\underline u^h\|\le \mbox{Tol}$ or for
 $\nu > \gamma>1$ if $\|\underline u^{h,\gamma} - \underline u^{h,\gamma+1}\|\le \mbox{Tol},$ output the required solution $\underline u^h$
else  ``program stopped after $\nu$ v-cycle''.
\item[STOP]
\end{description}
\end{algorithm}
This is to note that one may use FFT for each matrix vector multiplication to reduce the computational costs since the operator  $A$ acting on \eqref{f:linearsystem_01} is a toeplitz matrix~\cite{Gol}.
\subsection{Numerical results and discussions}
Here we present some experimental/computer generated results to demonstrate the efficiency of the schemes.  We implement the schemes in MATLAB. The MATLAB function "FFT" is used to define the Fourier sine preconditioner; MATLAB functions "wavedec", and "waverec" have been used for the wavelet diagonal preconditioner  with  the Daubechies wavelet 'db6'. Here we consider a spatial periodic $[0 \ 1]$ domain and
$$J(x) = \sum_{r=-\infty}^{\infty} J^{\infty} (x-r)\quad \text{with} \quad J^{\infty}(x)=\sqrt{\frac{100}{\pi}}e^{-100x^2}.$$ A detailed discussion about such a consideration of the kernel function can be found in \cite{SamirKumarBhowmik04}.  We consider $\sigma=0.01$, $\mu=0.01$, $r=0.01$, $\lambda=0.1$, $\delta=100$ for all the numerical results presented here.

In Figure~\ref{f:fig01aaa}, we present condition numbers of the preconditioned operators $P\mathcal{L}P$, and $D\mathcal{L}D$, as well as the condition number of $\mathcal{L}$. We notice that $\rho(D\mathcal{L}D)$, and  $\rho(P\mathcal{L}P)$ are of $\mathcal{O}(1)$, where as $\rho(\mathcal{L})$ is of $\mathcal{O}(N^2)$.
Then, in Figure~\ref{f:fig02}, we compare the number of iterations taken by the preconditioned solvers for a set of $N$ values. We notice that the preconditioned systems converge in a few iterations and the number of iterations is independent of the system size.
\begin{figure}[here]
\begin{center}
\includegraphics[height=0.45\textwidth, width=.49\textwidth]{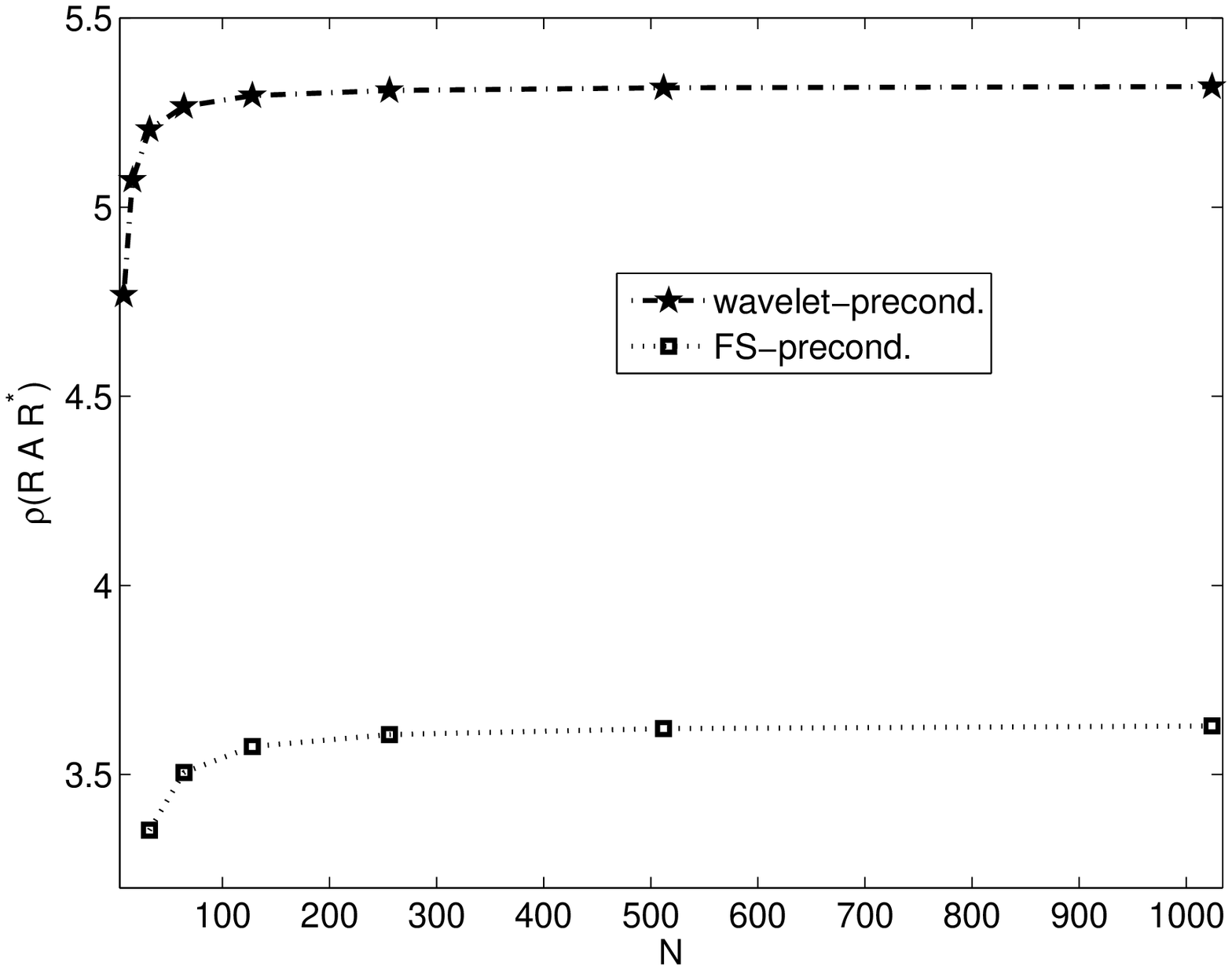}
\includegraphics[height=0.45\textwidth, width=.49\textwidth]{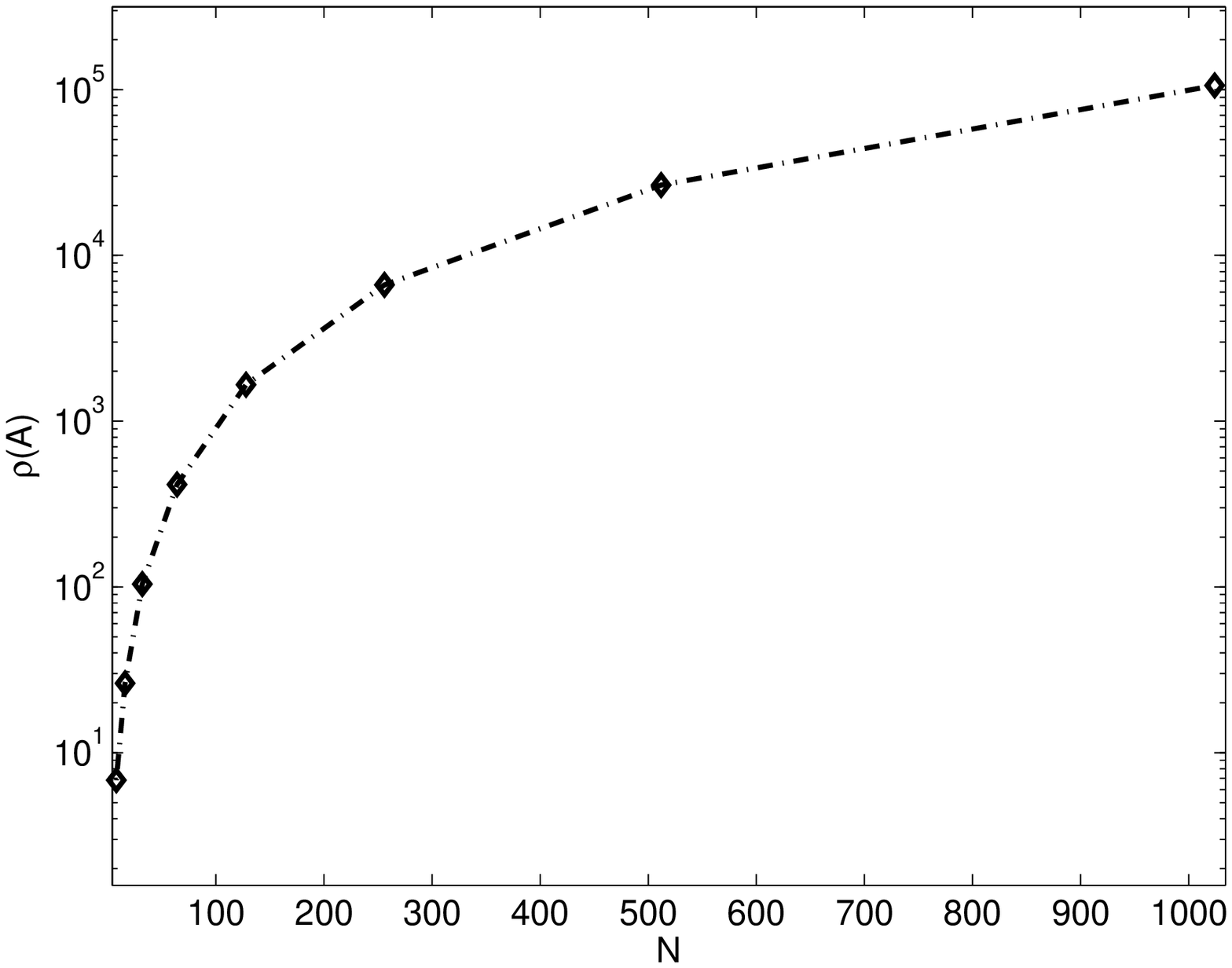}
\caption{Left Figure: Condition numbers of the wavelet preconditioned operator, and the Fourier sine preconditioned operator, both are of $\mathcal{O}(1)$,
Right Figure: Condition number of $A$, which is of $\mathcal{O}(N^2)$.}
\label{f:fig01aaa}
\end{center}
\end{figure}
\begin{figure}[t]
\begin{center}
\includegraphics[height=0.45\textwidth, width=.7\textwidth]{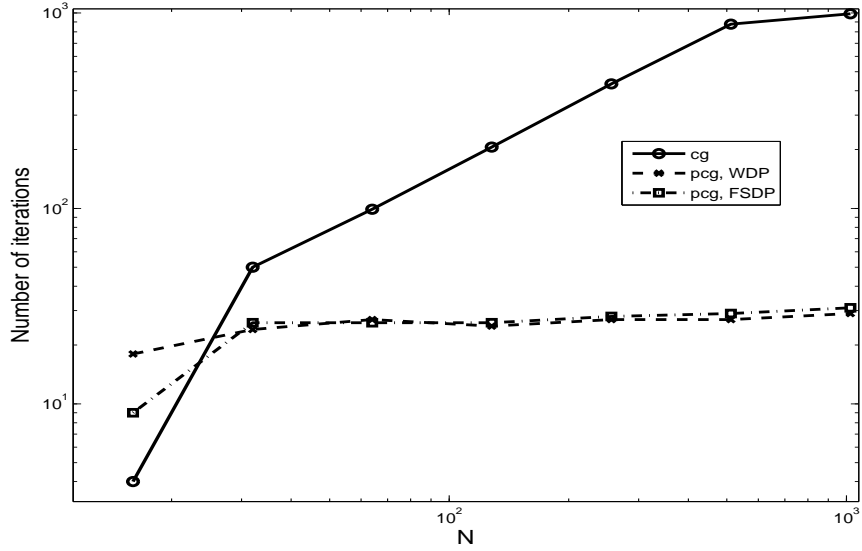}
\caption{The Number of iterations taken to converge by  the conjugate gradient, the WDP conjugate gradient and the FSP conjugate gradient methods to solve \eqref{f:linearsystem_01}  considering $\dt=0.01$.}
\label{f:fig02}
\end{center}
\end{figure}
Then we demonstrate the total CPU time taken to solve the linear system  by the solvers MG, WDP CG and FSP CG respectively to see the time efficiency of the techniques in Figure~\ref{f:time_mg_wdp_fsp1}. Here we observe that in terms of CPU time the MG out performs all other schemes. In fact, the MG method takes very little computational time compared to the other two.  The WDP and FSP techniques take most of the time to define the preconditioners, the preprocessing steps to use preconditioned linear system solvers.
\begin{figure}[here]
\begin{center}
\includegraphics[height=0.45\textwidth,width=.7\textwidth]{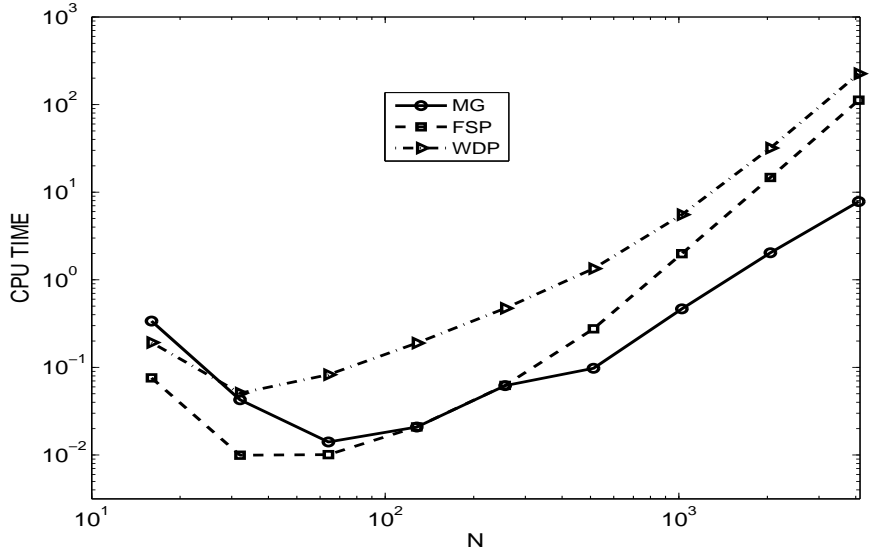}\\
\caption{CPU time taken to converge by the  WDP, the FSP and the MG methods for various choices of system size. For the multigrid method we consider \textbf{one $\log2(N)-2$ level $v-$cycle}  with one SOR iteration with $\omega =1.2$ in all levels but the coarsest one. In the coarsest level we solve the system \textbf{exactly}. We use Intel(R) Core(TM) i3 CPU  M380 at 2.53 GHz processor with ram 2.00 GB.
Here we solve \eqref{f:linearsystem_01} considering $\dt=0.01$, and by varying spatial grid points $N$, and $u_0(x) =\exp[-100(x-.5)^2]$.
}
\label{f:time_mg_wdp_fsp1}
\end{center}
\end{figure}
\subsection{An explicit implicit scheme}
While solving the linear system \eqref{f:linearsystem_01} we notice that $A$ is a full matrix. Thus matrix vector multiplications are computationally costly.  To reduce the computation cost further we look for an another scheme that may reduce computational costs.  We implement an explicit implicit scheme where $A$ becomes a spare matrix, thus reduces computation costs in matrix vector multiplications.

 We approximate the model \eqref{f:generalmodel01} in time by %
\[
 - \Delta t\sigma  \frac{\partial^2 u^n(x)}{\partial x^2} +(1 + r\Delta t)u^{n}(x) =u^{n-1}(x) + \Delta t \mu\frac{\partial u^{n-1}(x)}{\partial x} + \Delta t \lambda  \int_{\Omega}\ J(x-y)\left( u^{n-1}(y)-u^{n-1}(x)
   \right)dy,
\]
where $u^{n}(x)=u(x, t_n)$, $n \ge 0$. For simplicity we write
\begin{equation}\label{f:generalmodel_dt_02}
\mathcal{L}_1(u^n(x)) =\mathcal{L}_2 u^{n-1}(x),
\end{equation}
where
\[
 \mathcal{L}_1(u^n(x)) = -\Delta t\sigma  \frac{\partial^2 u^n(x)}{\partial x^2} +(1+r\Delta t)u^{n}(x),
\]
and
\[
\mathcal{L}_2(u^n(x)) =   \Delta t \mu\frac{\partial u^{n-1}(x)}{\partial x} + \Delta t \lambda  \int_{\Omega}\ J(x-y)\left( u^{n-1}(y)-u^{n-1}(x)
   \right)dy.
\]
The operator $\mathcal{L}_1$ is an elliptic partial differential operator~\cite{LCEvans1998}.
After the time integration,  the right hand side  of \eqref{f:generalmodel_dt_02} is a known vector and  explicitly depends on $u^{n-1}$, $n\ge 1$. Thus all the linear algebra tools we discussed above for \eqref{f:generalmodel_dt_01} are applicable to \eqref{f:generalmodel_dt_02}, and they are indeed, efficient  schemes for elliptic PDEs.

 To justify our claim we implement the MG method, the fastest tool we implemented in the previous section, to solve the linear system obtained from  \eqref{f:generalmodel_dt_02}. We compare the CPU time taken to solve the linear system obtained by the implicit solver and the explicit implicit solver \eqref{f:generalmodel_dt_02} in Figure~\ref{f:time_mg_imp_expimp_2013}.  Here we notice that the scheme \eqref{f:generalmodel_dt_01} and the explicit implicit scheme \eqref{f:generalmodel_dt_02} are comparable. In fact, it is observed from  Figure~\ref{f:time_mg_imp_expimp_2013} that the scheme \eqref{f:generalmodel_dt_02} requires a minimum  CPU time to converge compared to all other solvers.
\begin{figure}[here]
\begin{center}
\includegraphics[height=0.45\textwidth,width=.7\textwidth]{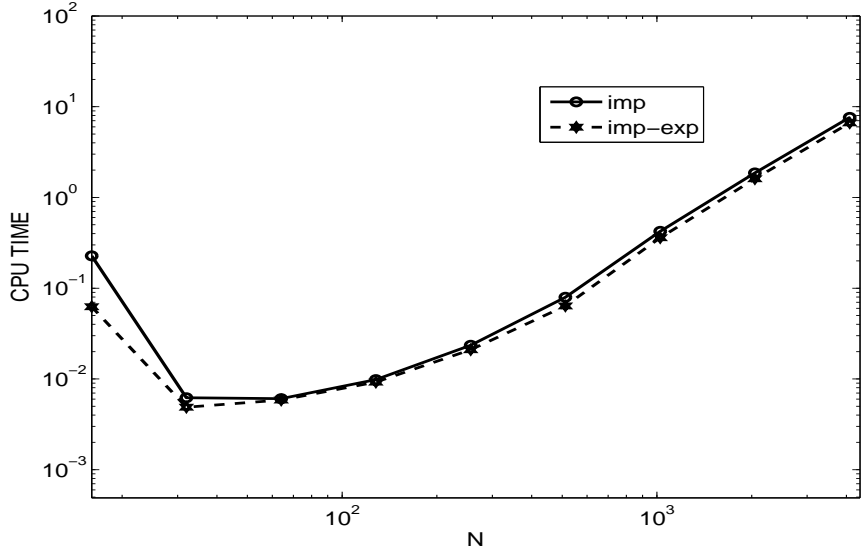}\\
\caption{CPU time taken to converge by MG methods for various choices of system size from the implicit as well as the explicit implicit solvers. Here we consider \textbf{one $\log2(N)-2$ level $v-$cycle} with one SOR iteration with $\omega =1.2$ in all levels but the coarsest one. In the coarsest level we solve the system \textbf{exactly}. We use Intel(R) Core(TM) i3 CPU  M380 at 2.53 GHz processor with ram 2.00 GB.
Here we solve \eqref{f:linearsystem_01}  and \eqref{f:generalmodel_dt_02} considering $\dt=0.01$, and by varying spatial grid points $N$, and $u_0(x) =\exp[-100(x-.5)^2]$.
}
\label{f:time_mg_imp_expimp_2013}
\end{center}
\end{figure}

\section{Stability analysis}\label{section02}
From the above Section we see that the scheme  \eqref{f:generalmodel_dt_02} dominates the implicit scheme in terms of computational time.  This numerical
 experiment motivates us to analyze the stability and the accuracy of an explicit and an explicit implicit scheme.  For the simplicity of the stability analysis we consider $\sigma = \mu=\lambda=r=1$.
Here we analyze the stability of the forward Euler scheme (explicit) and a mix Euler scheme (explicit implicit).
We consider the linear partial integro-differential equation~\cite{ D.J.Duffy03} (an IVP)
\begin{equation}\label{vna01:f}
u_{t}(x,t)=-u+u_x + u_{xx}+\int_{-\infty}^{\infty} J(x-y)\left( u(y,t)-u(x,t)\right) dy
\end{equation}
with $u(x,t_0)=u_0 (x),$  $x\in \R.$
This IVP  can be approximated  in space by
\begin{eqnarray}\label{vna02:f}
\frac{d\uh_{j}(t)}{dt}&=&
\frac{\uh_{j+1}-2\uh_j+\uh_{j-1}}{h^2}
+\frac{\uh_{j+1}-\uh_{j-1}}{2h}- \uh_j
\nonumber\\
&&\qquad+\qquad h \sum_{k=-\infty}^{\infty} J(x_j-x_k)(\uh_k-\uh_j)
\end{eqnarray}
for each $j\in \mathbb{Z}$ where $\uh_{j}(t)\approx u(x_j,t)$ and $x_j=jh$ where $h$ is the uniform spacing between the grid points $x_j$ and $x_{j+1}$ for all $j\in \mathbb{Z}$.
We need the following definitions to support our study.

For the
sequence $\{v_{m}:m\in \Z \}$ on the mesh points $\{x_{m} = mh : m\in \Z \}$ the discrete Fourier Transform (DFT) is defined by
\begin{equation}\label{vna04:f}
\tilde v(\xi)=\frac{h}{\sqrt{2 \pi}}\sum_{m=-\infty}^{\infty}
 e^{-i h m\xi} v_{m}
\end{equation}
if $v_m \in L_2 (h \Z)$,
and its inverse is %
\begin{equation}\label{vna05:f}
v_m=\frac{1}{\sqrt{2 \pi}}\int_{-\frac{\pi}{h}}^{\frac{\pi}{h}}
 e^{i h m\xi} \tilde v(\xi) d\xi 
\end{equation}
where $\xi \in[\frac{-\pi}{h}, \frac{\pi}{h}].$
Parseval's Formulae ~\cite{Str, J.S.Walker} are defined as
\begin{equation}\label{vna05pd:f}
|| \tilde v||_{h}^{2}=
\int_{-\frac{\pi}{h}}^{\frac{\pi}{h}}|\tilde v(\xi)|^2 d\xi=
\sum_{m=-\infty}^{\infty} h |v_m|^{2}=||v||_{h}^{2}.
\end{equation}
\subsection*{An explicit scheme}
We apply the explicit Euler scheme  to the semi-discrete model (\ref{vna02:f}) to obtain
\begin{eqnarray*}
\uh_{j}^{n+1}-\uh_{j}^{n}&=&
-\dt \uh_j^n+\dt \frac{U_{j+1}^n-U_{j-1}^n}{2h}+
\dt\frac{\uh_{j+1}^n-2\uh_j^n+\uh_{j-1}^n}{h^2}\\
&& \qquad+\qquad h \dt \sum_{k=-\infty}^{\infty} J(x_j-x_k)
\left( \uh_{k}^{n}-\uh_{j}^{n} \right)
\end{eqnarray*}
where $U_j^n = U(x_j, t_n).$
This is equivalent to
\begin{eqnarray}\label{vna03:f}
  \uh_{j}^{n+1}&=&
  \dt\frac{U_{j+1}^n-U_{j-1}^n}{2h}
  +
  \dt\frac{\uh_{j+1}^n-2\uh_j^n+\uh_{j-1}^n}{h^2} + \uh_{j}^{n}\left(1- \dt- h\dt  \sum_{k=-\infty}^{\infty} J(x_j-x_k)\right)
 \nonumber
 \\
 &&\qquad+
h\dt \sum_{k=-\infty}^{\infty} J(x_j-x_k)\uh_{k}^{n}.
\end{eqnarray}

%

%

We multiply (\ref{vna03:f}) by $\frac{h}{\sqrt{2\pi}}e^{-ijh\xi}$
and sum over all $j$ to obtain
\begin{eqnarray*}
\frac{h}{\sqrt{2\pi}}\sum_{j=-\infty}^{\infty} e^{-ijh\xi} \uh_{j}^{n+1}
&=&\frac{h}{\sqrt{2\pi}} \sum_{j=-\infty}^{\infty}e^{-ijh\xi} \uh_{j}^{n}\left(1-\dt-
 h\dt  \sum_{k=-\infty}^{\infty} J(x_j-x_k)\right)
\\
&&+
\frac{h}{\sqrt{2\pi}}\sum_{k=-\infty}^{\infty}e^{-ikh\xi} \uh_{k}^{n}
\left[ h\dt \sum_{j=-\infty}^{\infty} J(x_j-x_k)e^{-i(j-k)h\xi}\right]
\\
&&+
\frac{h}{\sqrt{2 \pi}}\sum_{j=-\infty}^{\infty}e^{-ijh\xi}\left(\dt\frac{\uh_{j+1}^n-2\uh_j^n+\uh_{j-1}^n}{h^2}\right)
\\
&&+
\frac{h}{\sqrt{2 \pi}}\sum_{j=-\infty}^{\infty}e^{-ijh\xi}\left(\dt\frac{\uh_{j+1}^n-\uh_{j-1}^n}{2h}\right).
\end{eqnarray*}
So using $J(x)=J(-x)$ we have
%
\begin{eqnarray*}
\tilde \uh^{n+1}(\xi)
& = &\left\{
1-\dt + h \dt \sum_{j=-\infty}^{\infty} J(x_j-x_k)
\left( e^{i(k-j)h\xi}-1\right)
\right\} \tilde \uh^{n}(\xi)
\\
&&+ \frac{\dt}{h^2} \tilde U^n(\xi) \left ( e^{ih\xi}+e^{-ih\xi} - 2 \right )
+
\frac{\dt}{2 h} \tilde U^n(\xi) \left ( e^{ih\xi}-e^{-ih\xi} \right).
\end{eqnarray*}
Thus
\begin{equation}\label{vna08:f}
\tilde \uh^{n}(\xi)= \left ( g(h\xi,\dt)\right )^{n} \tilde \uh^{0}(\xi),
\end{equation}
where
\begin{eqnarray}\label{vna07:f}
g(h \xi,\dt)&=&1-\dt + \dt\left(h \sum_{r=-\infty}^{\infty} e^{-irh\xi} J(x_r)
-
h \sum_{r=-\infty}^{\infty} e^{-irh 0} J(x_r) \right)
\nonumber \\
&& +\frac{\dt}{h^2}\left ( e^{ih\xi}+e^{-ih\xi} - 2 \right )+ \frac{\dt}{2h}\left ( e^{ih\xi}-e^{-ih\xi} \right )
\nonumber \\
&=&1-\dt+\sqrt{2 \pi} \dt\left(\tilde J(\xi)-\tilde J(0)\right)-4\frac{\dt}{h^2}\sin^2{\frac{h\xi}{2}}
+
\frac{i \dt}{h} \sin(h\xi).
\end{eqnarray}

%
Now we carry out the stability analysis of (\ref{vna03:f}) following~\cite{Q.Alfio, Str}.
We need the following  Lemma to bound $g(h\xi, \dt).$
\begin{theorem}\label{lemma01}
Assume that $J(x)\in L_2 (\mathbb{R})\cap C(\mathbb{R})$  satisfies
\begin{description}
\item [H1] $J(x) \ge 0;$
\item [H2]  $J(x)$ is normalized such that $\int_{-\infty}^{\infty} J(x)dx=1;$
\item [H3]  $J(x)$ is symmetric, i.e. $J(x)=J(-x),$ for all $x\in
\mathbb{R};$
\item [H4] $J(x)$ is decreasing on $(0, \infty);$
\item [H5] $\hat J(\xi) \ge 0.$
\end{description}
Then \textbf{H1}~-~\textbf{H4} give the DFT results
$0\le \tilde J(0)$ and
$\tilde J(\xi) \le \tilde J(0) \le \sqrt{\frac{2}{\pi}}+\tilde J(\xi)$
for all $\xi \in [-\frac{\pi}{h}, \frac{\pi}{h}]$ and the CFT results
$\hat J(\xi) \le \hat J(0) \le \sqrt{\frac{2}{\pi}}+\hat J(\xi).$
Further, if $\mathbf{H5}$ holds, then $\tilde J(\xi)\ge 0$ for all $J \in H^r(\mathbb{R})$, $r >\frac{1}{2}$, \cite{SKB02}.
\end{theorem}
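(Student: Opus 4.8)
The plan is to exploit the nonnegativity, symmetry and monotonicity hypotheses to control the sampled transform $\tilde J$ and the continuous transform $\hat J$ termwise, and then to invoke a Poisson-summation (aliasing) identity for the final nonnegativity assertion. First I would rewrite the DFT \eqref{vna04:f} of the samples $\{J(mh)\}$ in real form: pairing the indices $m$ and $-m$ and using the symmetry \textbf{H3} gives
\[
\tilde J(\xi)=\frac{h}{\sqrt{2\pi}}\left[J(0)+2\sum_{m=1}^{\infty}\cos(mh\xi)\,J(mh)\right],
\qquad
\tilde J(0)=\frac{h}{\sqrt{2\pi}}\left[J(0)+2\sum_{m=1}^{\infty}J(mh)\right].
\]
From \textbf{H1} every summand in $\tilde J(0)$ is nonnegative, so $0\le\tilde J(0)$; and since $\cos(mh\xi)\le 1$ with $J(mh)\ge 0$, the inequality $\tilde J(\xi)\le\tilde J(0)$ is immediate.

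The quantitative upper bound is the one place where the monotonicity \textbf{H4} enters. Subtracting the two series and using $1-\cos\le 2$,
\[
\tilde J(0)-\tilde J(\xi)=\frac{2h}{\sqrt{2\pi}}\sum_{m=1}^{\infty}\bigl(1-\cos(mh\xi)\bigr)J(mh)
\le\frac{4h}{\sqrt{2\pi}}\sum_{m=1}^{\infty}J(mh).
\]
Because $J$ is decreasing on $(0,\infty)$, the right-endpoint Riemann sum underestimates the integral, so $h\sum_{m\ge 1}J(mh)\le\int_0^{\infty}J(x)\,dx=\tfrac12$, the last equality coming from the normalization \textbf{H2} and the symmetry \textbf{H3}. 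Substituting gives $\tilde J(0)-\tilde J(\xi)\le\frac{4}{\sqrt{2\pi}}\cdot\tfrac12=\sqrt{2/\pi}$, which is exactly the claimed bound. The CFT statement follows by the same argument with integrals replacing sums: writing $\hat J(\xi)=\frac{2}{\sqrt{2\pi}}\int_0^{\infty}\cos(x\xi)J(x)\,dx$ yields $\hat J(\xi)\le\hat J(0)=\frac{1}{\sqrt{2\pi}}$ from $\cos\le 1$, while $\hat J(0)-\hat J(\xi)\le\frac{4}{\sqrt{2\pi}}\int_0^{\infty}J=\sqrt{2/\pi}$ from $1-\cos\le 2$; note that in the continuous case \textbf{H4} is not even needed.

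For the final claim I would pass to the aliasing relation linking the sampled transform to the periodization of the continuous one. Applying Poisson summation to \eqref{vna04:f} produces
\[
\tilde J(\xi)=\sum_{k\in\Z}\hat J\!\left(\xi+\tfrac{2\pi k}{h}\right),
\]
after the constants $h/\sqrt{2\pi}$ and $\sqrt{2\pi}/h$ cancel. Under \textbf{H5} each term satisfies $\hat J(\cdot)\ge 0$, so the whole series is nonnegative and $\tilde J(\xi)\ge 0$ for every $\xi$. The regularity $J\in H^r(\R)$ with $r>\tfrac12$ is what makes this rigorous rather than formal: Sobolev embedding guarantees that the pointwise samples $J(mh)$ are well defined and that $\hat J$ decays fast enough for the periodized series to converge absolutely and uniformly, justifying the interchange. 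This verification in the stated Sobolev class is the only genuinely delicate point, which is precisely why the detailed argument is attributed to \cite{SKB02}; once the cosine representation and the monotone Riemann-sum comparison are in hand, all the remaining inequalities are elementary.
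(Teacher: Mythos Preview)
The paper does not supply its own proof of this proposition: it states the result and cites \cite{SKB02}, then immediately resumes with ``Now we back to the main discussion.'' So there is nothing in the paper to compare your argument against.

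That said, your proof is correct and is the natural one. The cosine representation obtained from \textbf{H3} gives $0\le\tilde J(0)$ and $\tilde J(\xi)\le\tilde J(0)$ at once; the monotone Riemann-sum comparison (this is where \textbf{H4} is genuinely used) together with $1-\cos\le 2$ yields the $\sqrt{2/\pi}$ gap exactly; and the aliasing identity from Poisson summation transfers \textbf{H5} to $\tilde J(\xi)\ge 0$, with the $H^r$, $r>\tfrac12$, hypothesis supplying the decay needed for absolute convergence of the periodized series. Your remark that \textbf{H4} is not needed in the continuous case is also correct. This would stand as a self-contained proof of the proposition.
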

%

Now we back to the main discussion. The scheme is stable if
\[
|g| \le 1.
\]
Here
\begin{eqnarray*}
|g|^2 &=& \left( 1-\dt+\sqrt{2 \pi} \dt(\tilde J(\xi)-\tilde J(0))-\frac{4\dt}{h^2}\sin^2\left(\frac{h\xi}{2}\right)\right )^2
\\
&&\qquad + \qquad
\left (\frac{\dt}{h}\right)^2 \sin^2 \left(h\xi \right) \le 1
\end{eqnarray*}
gives
\begin{eqnarray*}
 &&\dt\left(
1+ \tilde q^2(\xi) -2\tilde q(\xi) + \frac{8}{h^2}\sin^2\left(\frac{h\xi}{2}\right)
-\frac{8\tilde q(\xi)}{h^2}\sin^2\left(\frac{h\xi}{2}\right)+\right.\\
& & \left.
\frac{16}{h^4}\sin^4\left(\frac{h\xi}{2}\right)
+\frac{1}{h^2}\sin^2 h\xi \right)
 \le \left(2- 2\tilde q(\xi) +\frac{8}{h^2}\sin^2\frac{h\xi}{2} \right).
\end{eqnarray*}
Thus applying Proposition~\ref{lemma01} we have
\[
\dt \left(9+\frac{25}{h^2} + \frac{16}{h^4} \right) \le 4,
\]
and so
\begin{equation}\label{f:stabilitycondition01}
\dt \le \frac{4h^4}{ (3h^2+4)^2+h^2} \le  \frac{4h^4}{ (3h^2+4)^2} = \frac{4}{ (3 + 4/h^2)^2}.
\end{equation}
\begin{thrm}\label{lemma03}
If $J(x)$  is a normalized symmetric nonnegative function and $J \in L_2(\mathbb{R})\cap C(\mathbb{R})$ then
there exists $0 < \frac{4}{ (3 + 4/h^2)^2} \le \dt^{*}$  such that
 \[ \|\uh^n\|_h \le \|\uh^0\|_h\]
for all $0<\dt\le \dt^{*}$ and $n\ge 0.$
\end{thrm}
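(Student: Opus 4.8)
The plan is to run a von Neumann (Fourier) stability argument and then convert the resulting symbol bound into the asserted norm inequality through Parseval's formula \eqref{vna05pd:f}. The starting point is the amplification identity \eqref{vna08:f}, namely $\tilde\uh^n(\xi)=\big(g(h\xi,\dt)\big)^{n}\,\tilde\uh^0(\xi)$, with $g$ given explicitly by \eqref{vna07:f}. First I would apply Parseval's formula twice to write
\[
\|\uh^n\|_h^2 = \|\tilde\uh^n\|_h^2 = \int_{-\pi/h}^{\pi/h} |g(h\xi,\dt)|^{2n}\,|\tilde\uh^0(\xi)|^2\,d\xi,
\]
so that the whole statement reduces to establishing the uniform bound $\sup_{\xi\in[-\pi/h,\pi/h]}|g(h\xi,\dt)|\le 1$. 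Once that holds we have $|g|^{2n}\le 1$ pointwise, hence $\|\uh^n\|_h^2\le\int_{-\pi/h}^{\pi/h}|\tilde\uh^0(\xi)|^2\,d\xi=\|\tilde\uh^0\|_h^2=\|\uh^0\|_h^2$ for every $n\ge 0$.

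Next I would prove the amplification bound $|g|\le 1$ along the lines already laid out above. Writing $\tilde q(\xi)=\sqrt{2\pi}\big(\tilde J(\xi)-\tilde J(0)\big)$ and $s=\sin^2(h\xi/2)$, one squares $g$, separates real and imaginary parts, and reduces $|g|^2\le 1$ to the inequality displayed just before the statement. The hypotheses on $J$ enter here through Proposition~\ref{lemma01}: it yields $\tilde J(\xi)\le\tilde J(0)\le\sqrt{2/\pi}+\tilde J(\xi)$, hence $\tilde q(\xi)\in[-2,0]$ uniformly in $\xi$. Combining this with the trigonometric bounds $s\le 1$ and $\sin^2(h\xi)\le 1$ bounds the $\dt$-coefficient on the left by $9+25/h^2+16/h^4=\big((3h^2+4)^2+h^2\big)/h^4$, and a corresponding elementary lower bound on the right-hand side yields the sufficient condition $\dt\le 4/(3+4/h^2)^2$ recorded in \eqref{f:stabilitycondition01}.

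Finally I would define $\dt^{\ast}$ to be the supremum of all step sizes for which the uniform bound $\sup_\xi|g(h\xi,\dt)|\le 1$ holds. Since the explicit value $4/(3+4/h^2)^2$ already guarantees that bound, every $\dt$ in $(0,\dt^{\ast}]$ does as well, and in particular $0<4/(3+4/h^2)^2\le\dt^{\ast}$, which is the inequality asserted in the statement. Feeding $|g|\le 1$ back into the Parseval identity of the first paragraph then completes the proof.

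I expect the main obstacle to be the amplification bound itself, and within it the control of the nonlocal term $\tilde q(\xi)$. Unlike the differential part, whose symbol is an elementary trigonometric polynomial, $\tilde q$ is the discrete Fourier symbol of the convolution operator and neither its sign nor its magnitude is clear a priori; the whole argument hinges on pinning it to the band $[-2,0]$, which is precisely the content of Proposition~\ref{lemma01} and the reason hypotheses \textbf{H1}--\textbf{H4} are imposed. The remaining steps (the two applications of Parseval, the elementary maximization of the trigonometric terms, and the definition of $\dt^{\ast}$) are routine once that bound is secured.
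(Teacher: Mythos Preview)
Your proposal is correct and follows exactly the paper's approach: the paper derives the amplification bound $|g|\le 1$ (via Proposition~\ref{lemma01} and the elementary trigonometric estimates you describe) immediately before the theorem, and its proof then consists of the single sentence ``The proof easily follows from Parseval's relation,'' which is precisely the $\|\uh^n\|_h=\|\tilde\uh^n\|_h\le\|\tilde\uh^0\|_h=\|\uh^0\|_h$ step you spell out. You have simply made explicit what the paper leaves implicit.
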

\begin{proof}
The proof easily follows from perseval's relation.
\end{proof}
Thus  in the discrete $L_2$ norm, (\ref{vna03:f}) is a stable  scheme~\cite[Definition 1.5.1]{Str} with the stability condition \eqref{f:stabilitycondition01}.

\subsection*{An explicit implicit scheme}
Applying a mixed Euler scheme we write a full discrete version of the model  (\ref{vna01:f}) by
\begin{eqnarray*}
\uh_{j}^{n+1}-\uh_{j}^{n}&=&
-\dt \uh_j^{n+1}+\dt \frac{U_{j+1}^n-U_{j}^n}{h}+
\dt\frac{\uh_{j+1}^{n+1}-2\uh_j^{n+1}+\uh_{j-1}^{n+1}}{h^2}\\
&& \qquad+\qquad h \dt \sum_{k=-\infty}^{\infty} J(x_j-x_k)
\left( \uh_{k}^{n}-\uh_{j}^{n} \right)
\end{eqnarray*}
where $U_j^n = U(x_j, t_n).$
This is equivalent to
\begin{eqnarray}\label{vna03aa:f}
  \uh_{j}^{n+1}\left(1+\dt\right)&=&
  \dt\frac{U_{j+1}^n-U_{j}^n}{2h}
  +
  \dt\frac{\uh_{j+1}^{n+1}-2\uh_j^{n+1}+\uh_{j-1}^{n+1}}{h^2} + \uh_{j}^{n}\left(1- h\dt  \sum_{k=-\infty}^{\infty} J(x_j-x_k)\right)
 \nonumber
 \\
 &&\qquad+
h\dt \sum_{k=-\infty}^{\infty} J(x_j-x_k)\uh_{k}^{n}.
\end{eqnarray}

%
Multiplying (\ref{vna03aa:f}) by $\frac{h}{\sqrt{2\pi}}e^{-ijh\xi}$
and summing over all $j$ we get
\begin{eqnarray*}
(1+\dt)\frac{h}{\sqrt{2\pi}}\sum_{j=-\infty}^{\infty} e^{-ijh\xi} \uh_{j}^{n+1}
&=&\frac{h}{\sqrt{2\pi}} \sum_{j=-\infty}^{\infty}e^{-ijh\xi} \uh_{j}^{n}\left(1-
 h\dt  \sum_{k=-\infty}^{\infty} J(x_j-x_k)\right)
\\
&&+
\frac{h}{\sqrt{2\pi}}\sum_{k=-\infty}^{\infty}e^{-ikh\xi} \uh_{k}^{n}
\left[ h\dt \sum_{j=-\infty}^{\infty} J(x_j-x_k)e^{-i(j-k)h\xi}\right]
\\
&&+
\frac{h}{\sqrt{2 \pi}}\sum_{j=-\infty}^{\infty}e^{-ijh\xi}\left(\dt\frac{\uh_{j+1}^{n+1}-2\uh_j^{n+1}+
\uh_{j-1}^{n+1}}{h^2}\right)
\\
&&+
\frac{h}{\sqrt{2 \pi}}\sum_{j=-\infty}^{\infty}e^{-ijh\xi}\left(\dt\frac{\uh_{j+1}^n-\uh_{j}^n}{h}\right).
\end{eqnarray*}
So using $J(x)=J(-x)$
%
\begin{eqnarray*}
\tilde \uh^{n+1}(\xi) (1+\dt-\frac{\dt}{h^2} \left ( e^{ih\xi}+e^{-ih\xi} - 2 \right ) )
& = &\left\{
1+ h \dt \sum_{j=-\infty}^{\infty} J(x_j-x_k)
\left( e^{i(k-j)h\xi}-1\right)
\right\} \tilde \uh^{n}(\xi)
\\
&&
+
\frac{\dt}{2 h} \tilde U^n(\xi) \left ( e^{ih\xi}-1 \right)
\end{eqnarray*}
giving
\[
\tilde \uh^{n+1}(\xi)=g(h\xi,\dt) \tilde \uh^{n}(\xi).
\]
And  we write
\begin{equation}\label{vna08a:f}
\tilde \uh^{n}(\xi)= \left ( g(h\xi,\dt)\right )^{n} \tilde \uh^{0}(\xi),
\end{equation}
where
\begin{eqnarray}\label{vna07:f}
g(h \xi,\dt)&=&
%
\frac{1+\sqrt{2 \pi} \dt\left(\tilde J(\xi)-\tilde J(0)\right)+ \frac{\dt}{h}\left ( e^{ih\xi}-1 \right )}
{1+\dt +4\frac{\dt}{h^2}\sin^2{\frac{h\xi}{2}}}.
\end{eqnarray}
The scheme is stable  if
\[
\left|1+\sqrt{2 \pi} \dt\left(\tilde J(\xi)-\tilde J(0)\right)+ \frac{\dt}{h}\left ( e^{ih\xi}-1 \right )\right| \le  \left|1+\dt +4\frac{\dt}{h^2}\sin^2{\frac{h\xi}{2}}\right|
\]
which gives
\[
\left(1+\sqrt{2 \pi} \dt\left(\tilde J(\xi)-\tilde J(0)\right)- \frac{\dt}{h} \right)^2+\left(\frac{\dt}{h}\right)^2 \le  \left(1+\dt +4\frac{\dt}{h^2}\sin^2{\frac{h\xi}{2}}\right)^2.
\]
Now
\[
 \left(1+\dt\right)^2\le  \left(1+\dt +4\frac{\dt}{h^2}\sin^2{\frac{h\xi}{2}}\right)^2,
\]
 and
 \[
 0 \le \left|\sqrt{2 \pi}\left(\tilde J(\xi)-\tilde J(0)\right)\right| \le 2.
 \]
Simplifying the above inequality we get
\[
\dt^2\left(\tilde q^2(\xi)-1 +\frac{2}{h^2}-\frac{2}{h}\tilde q(\xi) \right) \le \dt \left(2+\frac{2}{h}-2 \tilde q(\xi)\right),
\]
and so
\begin{equation}\label{f:stabilitycondition02}
\dt \le \frac{2h(h+1)}{3h^2+4h+2}.
\end{equation}

\begin{thrm}\label{lemma03a}
If $J(x)$  is a normalized symmetric nonnegative function and $J \in L_2(\mathbb{R})\cap C(\mathbb{R})$ then
there exists $0 < \frac{2h(h+1)}{3h^2+4h+2} \le \dt^{*}$  such that
 \[ \|\uh^n\|_h \le \|\uh^0\|_h\]
for all $0<\dt\le \dt^{*}$ and $n\ge 0.$
\end{thrm}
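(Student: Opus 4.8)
The plan is to mirror the stability argument already used for the explicit scheme in Proposition~\ref{lemma03}: the discrete $L_2$ bound will follow from a pointwise estimate on the amplification factor $g(h\xi,\dt)$ appearing in \eqref{vna08a:f} together with Parseval's identity \eqref{vna05pd:f}. All the analytic content lies in the assertion that the stability condition \eqref{f:stabilitycondition02} forces $|g(h\xi,\dt)|\le1$ uniformly in $\xi\in[-\pi/h,\pi/h]$; granting this, the norm estimate is purely formal.

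First I would recall the Fourier representation $\tilde \uh^n(\xi)=\left(g(h\xi,\dt)\right)^n\tilde \uh^0(\xi)$ from \eqref{vna08a:f}. The computation preceding the statement shows, after squaring the stability requirement and using $0\le\left|\sqrt{2\pi}\left(\tilde J(\xi)-\tilde J(0)\right)\right|\le2$ supplied by Proposition~\ref{lemma01}, that $|g|\le1$ holds whenever $\dt\le\frac{2h(h+1)}{3h^2+4h+2}$. Hence for any such $\dt$ we obtain the pointwise bound
\[
|\tilde \uh^n(\xi)|=|g(h\xi,\dt)|^{\,n}\,|\tilde \uh^0(\xi)|\le|\tilde \uh^0(\xi)|,\qquad n\ge0 .
\]
Integrating over $\xi\in[-\pi/h,\pi/h]$ and applying Parseval's formula \eqref{vna05pd:f} in both directions gives
\[
\|\uh^n\|_h^2=\int_{-\pi/h}^{\pi/h}|\tilde \uh^n(\xi)|^2\,d\xi\le\int_{-\pi/h}^{\pi/h}|\tilde \uh^0(\xi)|^2\,d\xi=\|\uh^0\|_h^2 ,
\]
and taking square roots yields $\|\uh^n\|_h\le\|\uh^0\|_h$. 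The existence of $\dt^{*}$ is then immediate, since the explicit value $\frac{2h(h+1)}{3h^2+4h+2}$ already certifies stability and thus bounds the admissible threshold from below.

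I do not anticipate a genuine difficulty, because the hard step---controlling $|g|$---is carried out in the display preceding the theorem. The one point requiring care is the \emph{uniformity} in $\xi$ of the estimate $|g(h\xi,\dt)|\le1$: the threshold in \eqref{f:stabilitycondition02} is obtained by majorizing the $\xi$-dependent kernel contribution by its worst case $\sqrt{2\pi}\left(\tilde J(0)-\tilde J(\xi)\right)\le\sqrt{2\pi}\sqrt{2/\pi}=2$ from Proposition~\ref{lemma01} and by using $\sin^2(h\xi/2)\le1$, so I would verify that these two majorizations are compatible and genuinely produce a sufficient (not merely necessary) condition for $|g|\le1$ over the whole band of frequencies.
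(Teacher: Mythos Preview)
Your proposal is correct and follows exactly the paper's approach: the paper's own proof consists of the single sentence ``The proof easily follows from Parseval's relation,'' relying on the preceding derivation of \eqref{f:stabilitycondition02} to guarantee $|g(h\xi,\dt)|\le1$ uniformly in $\xi$. Your write-up simply makes explicit the steps (pointwise bound on $\tilde U^n$, integration, Parseval) that the paper leaves implicit.
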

\begin{proof}
The proof easily follows from perseval's relation.
\end{proof}
Thus  in the discrete $L_2$ norm, (\ref{vna03aa:f}) is a stable  scheme~\cite[Definition 1.5.1]{Str} with the stability condition \eqref{f:stabilitycondition02}.
We demonstrate maximum values of $\dt$ from both 
 \eqref{f:stabilitycondition01} and \eqref{f:stabilitycondition02} respectively in Figure~\ref{f:dt_imp_exp_01} for various choices of $h$. It shows the dominance of the semi-implicit scheme.
\begin{figure}[here]
\begin{center}
\includegraphics[height=0.5\textwidth, width=.7\textwidth]{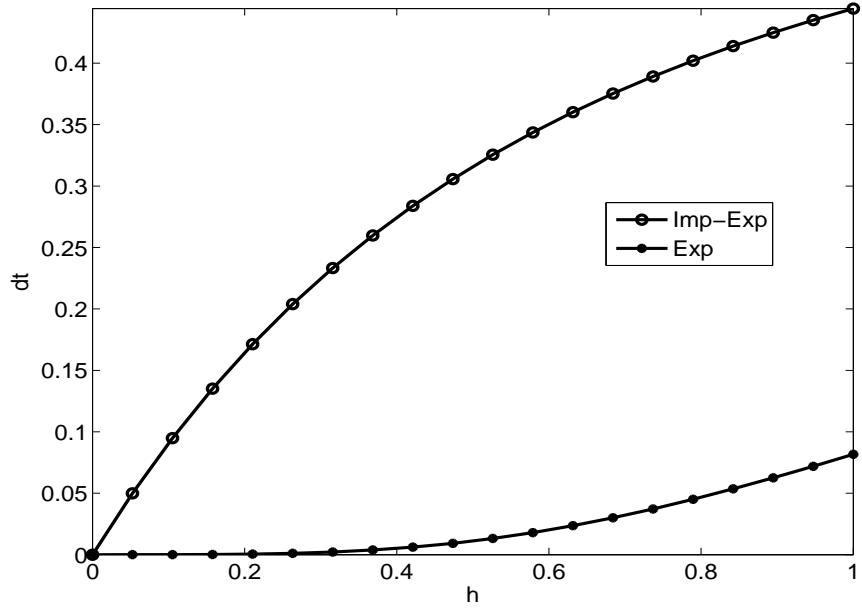}\\
\caption{Maximum choices of $\dt$ from the inequalities \eqref{f:stabilitycondition01} and \eqref{f:stabilitycondition02}.}
\end{center}
\label{f:dt_imp_exp_01}
\end{figure}
\subsection*{Computational algorithm}
From the schemes (\ref{vna08:f}) and (\ref{vna08a:f})  it follows that the DFT gives a each way to compute numerical solutions.
The approximate solution $\uh^n (\cdot)$ can be computed in the spatial domain simply, accurately and rapidly using the following steps.
For faster computations, one may precompute  the FFT of $u_0$, $J(x)$, and $J(0)$.
\begin{enumerate}

\item Compute the fast Fourier transform (FFT) of $u_0$.

\item Compute $g$  using the FFT of $J$.

\item Evaluate $g^n$ and multiply with the result in step~$1$.

\item Compute the inverse FFT of the product defined in step~$3$.

\end{enumerate}

\section{Accuracy analysis}\label{f:section04}
Applying the continuous Fourier transform  (\ref{vna01:f}) can be written as
\begin{equation}\label{vnaa01:f}
\hat u_{t}(\xi,t)=
\hat q(\xi)\hat u(\xi,t),
\end{equation}
where
\[
\hat q(\xi)=\sqrt{2\pi}\left(\frac{-1}{\sqrt{2\pi}} + \hat J(\xi)-\hat J(0)-\frac{\xi^2}{\sqrt{2\pi}}+ \frac{i\xi}{\sqrt{2\pi}}\right).
\]
Thus the exact solution of (\ref{vnaa01:f}) in the frequency domain is
\begin{equation}\label{vnaa09:f}
\hat u(\xi,t)=e^{\hat q(\xi)t} \hat u_{0}(\xi).
\end{equation}
Here it is easy to verify that  $\Re(\hat q) \le 0$ (Proposition~\ref{lemma01}, which is presented
in Section~\ref{section02})
which gives the stability property
$|\hat u (\xi, t)| \le |\hat u_0 (\xi, t)|.$
\subsection*{Computational algorithm}
The following steps can be taken to compute the exact solution and the error in schemes  (\ref{vna08:f}) and  (\ref{vna08a:f}).

\begin{enumerate}

\item Compute  the FFT of $u_0$.

\item Compute $\hat q$ as defined  in (\ref{vnaa01:f}) using FFT of $\hat J$.

\item  Evaluate $\exp (n\dt \hat q)$  and multiply with the result obtained from step $1$.

\item Compute the inverse FFT of the product defined in $3$.

\item Evaluate  $\|u(\cdot, t)-U^n(\cdot)\|$.
\end{enumerate}

In this section it is our aim  to present a
theoretical bound of the error term $\|u - U^n\|$. Now  we carry out the convergence analysis of (\ref{vna03:f}) and (\ref{vna03aa:f}) following~\cite{Str}.
We apply  the inverse CFT on (\ref{vnaa09:f}) to get
\begin{equation}\label{vnaa09_a:f}
u(x,t)=\frac{1}{\sqrt{2 \pi}} \int_{-\infty}^{\infty} e^{ix\xi}
e^{\hat q(\xi)t} \hat u_{0}(\xi) d\xi,
\end{equation}
which is the exact solution of (\ref{vna01:f}).
\subsection{The explicit scheme (\ref{vna08:f})}
Using the inverse DFT formula  (\ref{vna05:f}) on (\ref{vna08:f}), the approximate solution
can be presented as
\begin{equation}\label{vnaa07:f}
\uh_{m}^{n}=\frac{1}{\sqrt{2\pi}}\int_{-\frac{\pi}{h}}^{\frac{\pi}{h}}
e^{imh\xi} \left(g(h\xi, \dt)\right)^n \tilde u_{0}(\xi) d\xi.
\end{equation}
Applying  the Fourier  interpolation~\cite{Str} the mesh function \eqref{vnaa07:f} can be written as
\begin{equation}\label{vnaa08:f}
\mathcal{S} U^{n}(x)=\frac{1}{\sqrt{2\pi}}\int_{-\frac{\pi}{h}}^{\frac{\pi}{h}}
e^{ix\xi} \left(g(h\xi, \dt)\right)^n \tilde u_{0}(\xi)d\xi.
\end{equation}
Thus
\[
u(x, t_n)-\mathcal{S} U^n (x) = \frac{1}{\sqrt{2\pi}} \int_{|\xi|\le \frac{\pi}{h}}
e^{ix\xi}\left( e^{\hat q(\xi)t_n} \hat u_{0}(\xi)-\left(g(h\xi, \dt)\right)^n
\tilde u_{0}(\xi)\right)d\xi
\]
\begin{equation}\label{vnaa09a:f}
+ \frac{1}{\sqrt{2\pi}} \int_{|\xi|> \frac{\pi}{h}}
e^{ix\xi} e^{\hat q(\xi)t_n}\hat u_{0}(\xi)d\xi.
\end{equation}
So
\begin{eqnarray}\label{vnaa10:f}
\|u(x,t_n)-\mathcal{S}U^n (x)\|^2
&\le&\frac{1}{\sqrt{2\pi}} \int_{|\xi|\le \frac{\pi}{h}}
\left| e^{\hat q(\xi)t_n} \hat u_{0}(\xi)-\left(g(h\xi, \dt)\right)^n
\tilde u_{0}(\xi)\right| ^2 d\xi \nonumber\\
&& +\frac{1}{\sqrt{2\pi}} \int_{|\xi|> \frac{\pi}{h}}
 \left |\hat u_{0}(\xi) \right |^2 d\xi,
\end{eqnarray}
%
using Parseval's relation and the stability property $\hat q \le 0$.

%
Let us find a bound related to the-evolution error first. Here
\begin{eqnarray*}
&&\frac{1}{\sqrt{2\pi}} \int_{|\xi|\le \frac{\pi}{h}}
\left| e^{\hat q(\xi)t_n} \hat u_{0}(\xi)-g(h\xi,\dt)^n
\tilde u_{0}(\xi)\right| ^2 d\xi  \\
&\le& \sqrt{\frac{2}{\pi}} \int_{|\xi|\le \frac{\pi}{h}}
\left| e^{\hat q(\xi)t_n} -g(h\xi,\dt)^n
\right|^2|\hat u_{0}(\xi)| ^2 d\xi
 +\sqrt{\frac{2}{\pi}} \int_{|\xi|\le \frac{\pi}{h}}\left| \sum_{j\ne 0}
 \hat u_{0} \left(\xi+\frac{2\pi j}{h} \right) \right|^2 d\xi,
\end{eqnarray*}
since $\left|g(h\xi,\dt)\right|\le 1.$
Now following \cite[page 204]{Str},~\cite{SKB02}
\begin{eqnarray}\label{vnaa06a:f}
\sqrt{\frac{2}{\pi}}\int_{|\xi|\le\frac{\pi}{h}}\left|\sum_{j\ne 0}
\hat u_{0} \left(\xi+\frac{2\pi j}{h}\right)\right|^2 d\xi  
&\le& C_1(\sigma) h^{2\sigma}\|u_0\|_{H^{\sigma}(\R)}^{2},
\end{eqnarray}
where
$
C_1(\sigma) = 2\left(\frac{1}{\pi}\right)^{2\sigma}\sum_{j=1}^{\infty}
\left(2j-1\right)^{-2\sigma}
$
assuming  that the initial function is smooth and there exists
$\sigma>\frac{1}{2}$ such that $\|u_0\|_{H^{\sigma}(\mathbb{R})}$ is bounded,
and
\begin{eqnarray}\label{vnaa06b:f}
\frac{1}{\sqrt{2\pi}}\int_{|\xi|>\frac{\pi}{h}}|\hat u_{0}(\xi)|^2 d\xi
&\le& C_2(\sigma) h^{2\sigma}\|u_0\|_{H^{\sigma}(\R)}^{2}.
\end{eqnarray}

When $t_n=n\dt$
\[
e^{\hat q(\xi)t_n}-g(h\xi,\dt)^n
={e^{\hat q(\xi) \dt}}^n-g^n=(e^{\hat q(\xi) \dt}-g)\sum_{r=0}^{n-1}{e^{\hat q(\xi) \dt}}^{n-r}g^r.\]
Since $\hat q(\xi) \le 0$ and $|g(h\xi,\dt)| \le 1$
we have
\[
|{e^{\hat q(\xi) \dt}}^n-g^n|\le n|e^{\hat q(\xi) \dt}-g|,
\]
or equivalently
\begin{equation}\label{vnaa11:f}
\left |e^{\hat q(\xi)t_n}-{g(h\xi,\dt)}^n \right |\le n | e^{\hat q(\xi)\dt}-g(h\xi,\dt)|.
\end{equation}
Now, for the scheme (\ref{vna03:f}),
\begin{eqnarray}\label{vnaa02:f}
e^{\dt \hat q(\xi)}-g(h \xi, \dt)
&=& e^{\dt \sqrt{2 \pi} \left(\hat J(\xi)-\hat J(0)-\frac{\xi^2+1}{\sqrt{2\pi}}+\frac{i\xi}{\sqrt{2\pi}} \right) }-
\left(1-\dt + \dt\sqrt{2\pi}\left(\tilde J(\xi)-\tilde J(0)\right)\right.\nonumber\\
&& \left.-4\frac{\dt}{h^2}\sin^2\frac{h\xi}{2}+
\frac{i\dt}{h} sin(h\xi) \right)
\nonumber \\
&=&
\dt\sqrt{2\pi}\left(\hat J(\xi)-\hat J(0)-\frac{\xi^2+1}{\sqrt{2\pi}}+\frac{i\xi}{\sqrt{2\pi}}\right)
 -
\dt\sqrt{2\pi}\left(\tilde J(\xi)-\tilde J(0)\right. \nonumber\\
&&\left.- \frac{4}{\sqrt{2\pi} h^2}\sin^2\frac{h\xi}{2}
+\frac{i\dt}{h} sin(h\xi)-1 \right)\nonumber\\
&&
 + \sum_{j=2}^{\infty} \frac{\dt^j}{j!} \left(\sqrt{2 \pi}\left(\hat J(\xi)-\hat J(0)-\frac{\xi^2+1}{\sqrt{2\pi}}+\frac{i\xi}{\sqrt{2\pi}}\right)\right)^j.
\end{eqnarray}
Assuming that $J\in H^{r} (\mathbb{R})$
with $r>\frac{1}{2}$ and applying the Poisson summation formula,
(\ref{vnaa02:f}) becomes
\begin{eqnarray}\label{vnaa05:f}
e^{\dt \hat q(\xi)}-g(h \xi, \dt)
 &=& -\dt\sqrt{2\pi}\sum_{j\ne 0} \left( \hat J(\xi+\frac{2\pi j}{h})
-\hat J(\frac{2\pi j}{h})\right) + \frac{4 \dt}{h^2} \sin^2\left( \frac{h\xi}{2}\right)
-\dt \xi^2
\nonumber
\\
&& \qquad
 + i \dt \xi - \frac{i\dt}{h} \sin\left(h\xi\right)+ \mathcal{O}(\dt^2)\nonumber\\
&=&  -\dt\sqrt{2\pi}\sum_{j\ne 0} \left( \hat J(\xi+\frac{2\pi j}{h})
-\hat J(\frac{2\pi j}{h})\right) 
+ \mathcal{O}\left(\left(\frac{h\xi}{2}\right)^4 \right)\nonumber\\
&&
 + \frac{i\dt}{h} \left(h\xi\right)^3 + \mathcal{O}((h\xi)^5)+ \mathcal{O}(\dt^2).
\end{eqnarray}
%
%
\begin{theorem}\label{lemma04}~\cite{SKB02}
Assume that
\textbf{H1}, \textbf{H3} and \textbf{H5} of Lemma~\ref{lemma01} hold and
in addition, the following condition holds:
\begin{description}
\item [H6.]  $\frac{d}{d\xi} \hat J(\xi)\le 0$ for
$\xi\ge 0.$
\end{description}
Then, for all $|\xi|\le \frac{\pi}{h},$
$\qquad
\left|\sum_{j\ne 0} \left( \hat J(\xi+\frac{2\pi j}{h})
-\hat J(\frac{2\pi j}{h})\right)\right| \le 2 \hat J(\frac{\pi}{h}).
$
\end{theorem}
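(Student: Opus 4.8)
The plan is to exploit that under \textbf{H3} and \textbf{H5} the transform $\hat J$ is real, even and nonnegative, and that \textbf{H6} makes it nonincreasing on $[0,\infty)$ (hence nondecreasing on $(-\infty,0]$), with $\hat J(\xi)\to 0$ as $|\xi|\to\infty$; the decay follows since $J$ is a normalized nonnegative function in $L_2(\R)\cap C(\R)$, so $J\in L_1$ and Riemann--Lebesgue applies. Writing $a=\tfrac{2\pi}{h}$, I would first record that
\[
S(\xi)=\sum_{j\ne 0}\left(\hat J(\xi+ja)-\hat J(ja)\right)
\]
is even in $\xi$: sending $\xi\mapsto-\xi$ and then reindexing $j\mapsto -j$ and using $\hat J(-\,\cdot)=\hat J(\cdot)$ recovers $S(\xi)$. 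Hence it is enough to bound $S$ for $0\le\xi\le \tfrac{a}{2}=\tfrac{\pi}{h}$.

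For such $\xi$ I would split $S(\xi)$ into its $j>0$ and $j<0$ parts, folding the latter by evenness. Monotonicity of $\hat J$ on $[0,\infty)$ fixes every sign: for $j\ge 1$ the term $\hat J(\xi+ja)-\hat J(ja)\le 0$, while the folded negative-index term equals $\hat J(ja-\xi)-\hat J(ja)\ge 0$ (note $ja-\xi\ge\tfrac{a}{2}>0$). Thus $S(\xi)=P-N$ with
\[
P=\sum_{j\ge 1}\left(\hat J(ja-\xi)-\hat J(ja)\right)\ge 0,\qquad N=\sum_{j\ge 1}\left(\hat J(ja)-\hat J(ja+\xi)\right)\ge 0,
\]
and the target will follow from the crude estimate $|S(\xi)|\le P+N$ once I show $P\le\hat J(\tfrac{\pi}{h})$ and $N\le\hat J(\tfrac{\pi}{h})$.

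The key step is a telescoping argument. Since $0\le\xi\le a$, the points $a-\xi\le a\le 2a-\xi\le 2a\le\cdots$ increase, so inserting the nonnegative gap terms $\hat J(ja)-\hat J((j+1)a-\xi)\ge 0$ collapses the partial sums of $P$:
\[
P+\sum_{j\ge 1}\left(\hat J(ja)-\hat J((j+1)a-\xi)\right)=\hat J(a-\xi)-\lim_{M\to\infty}\hat J(Ma)=\hat J(a-\xi).
\]
Because the inserted series is nonnegative, $P\le\hat J(a-\xi)\le\hat J(\tfrac{a}{2})=\hat J(\tfrac{\pi}{h})$, the last step using $a-\xi\ge\tfrac{a}{2}$ and that $\hat J$ is nonincreasing. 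An identical telescope applied to the ordered points $a\le a+\xi\le 2a\le\cdots$ gives $N\le\hat J(a)\le\hat J(\tfrac{\pi}{h})$. Combining, $|S(\xi)|\le P+N\le 2\hat J(\tfrac{\pi}{h})$, which is the assertion.

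The main obstacle I anticipate is bookkeeping rather than depth: correctly folding the negative-index terms through evenness, pinning down the sign of each summand from \textbf{H5}--\textbf{H6}, and confirming that the telescoped partial sums converge, which rests solely on $\hat J(Ma)\to 0$. Once those are in place the bound is a direct consequence of the nonnegativity and monotonicity of $\hat J$.
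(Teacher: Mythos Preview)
The paper does not actually supply a proof of this proposition; it is quoted with a citation to \cite{SKB02} and then used as a black box in the accuracy analysis. So there is no in-paper argument to compare against, and your task reduces to whether your own proof stands on its own.

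It does. The evenness reduction, the sign analysis of each summand via \textbf{H5}--\textbf{H6}, and the two telescoping bounds $P\le \hat J(a-\xi)\le \hat J(\pi/h)$ and $N\le \hat J(a)\le \hat J(\pi/h)$ are all correct and cleanly executed. One small remark: you justify $\hat J(Ma)\to 0$ through Riemann--Lebesgue by appealing to normalization, but \textbf{H2} is not among the hypotheses of this proposition. You do not need it: since $J\in L_2(\R)$ forces $\hat J\in L_2(\R)$, and $\hat J$ is nonnegative and nonincreasing on $[0,\infty)$ by \textbf{H5}--\textbf{H6}, a nonzero limit at infinity would make $\int_0^\infty \hat J(\xi)^2\,d\xi$ diverge. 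That alone gives the decay you use, and with it the telescoped partial sums converge and the nonnegative term-by-term bounds are legitimate.
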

%
Thus applying Proposition~\ref{lemma04}, (\ref{vnaa05:f}) can be written as
\begin{eqnarray}\label{vnaa05b:f}
|e^{\dt \hat q(\xi)}-g(h \xi, \dt)| &\le& \dt C_1(h) + C_2\dt h^2 |\xi|^4 
\end{eqnarray}
where $C_1 (h)=2\sqrt{2\pi} \hat J (\frac{\pi}{h}).$
If $J\in L_2(\mathbb{R})$, then $|\hat J(\xi)|\rightarrow 0$ as $|\xi| \rightarrow \infty$
~\cite{K.Maleknejad},~\cite[page 30]{LNTre}. The rate of convergence determines the  accuracy of the scheme.
We have
\begin{eqnarray}\label{vnaa06c:f}
&&\int_{|\xi|\le \frac{\pi}{h}} \left|\left(e^{\hat q(\xi)t_n}-g(h\xi,\dt)^n \right)
\hat u_{0} (\xi)\right|^2 d\xi \nonumber \\
&\le&\int_{|\xi|\le \frac{\pi}{h}} n^2\left|e^{\hat q(\xi)\dt}-g(h\xi,\dt) \right|^2
|\hat u_{0} (\xi)|^2 d\xi, \quad \mbox {using (\ref{vnaa11:f})}\nonumber \\
&\le&  n^2  \int_{|\xi|\le\frac{\pi}{h}}
\left|\dt C_1(h) + C_2\dt h^2 |\xi|^3  \right|^2
|\hat u_{0}(\xi)|^2 d\xi, \quad \mbox {using (\ref{vnaa05b:f})}\nonumber \\
&\le&  t_n\int_{-\infty}^{\infty}\left|  C_1(h) + C_2 h^2 |\xi|^4  \right|^2
|\hat u_{0}(\xi)|^2 d\xi \nonumber \\
&\le& t_n C_1(h)\|u_0\|^2 +  t_n C_2 h^2\|u_0\|_{H^{2}(\R)}^2 .
\end{eqnarray}
Thus applying (\ref{vnaa06a:f}),(\ref{vnaa06b:f}) and (\ref{vnaa06c:f}), (\ref{vnaa10:f}) takes the form
\begin{equation}
\|u(x,t_n)-\mathcal{S}U^n (x)\|\le C_1(h)\|u_0\|^2 +  C_2 h \|u_0\|_{H^{2}(\R)}^2 +
C_3(\sigma) h^{\sigma}\|u_0\|_{H^{\sigma}(\R)}
\end{equation}
for all $u_{0}\in H^{\sigma}(\mathbb{R})$ with $\sigma>\frac{1}{2}.$
Thus we end up with the following result.
\begin{thrm}\label{thrmm:ff}
If  the kernel function $J(x)$ satisfies assumptions \textbf{H1}~-~\textbf{H6}
and (\ref{vna03:f})  is a  stable approximation for the IDE (\ref{vna01:f}),
  then there exist
constants $C_{1}(h),$ $C_2,$ $C_{3}(\sigma)$ such that
\[
\|u(x,t_n)-\mathcal{S}U^n (x)\|\le t_n C_1(h)\|u_0\| +  C_2 h \|u_0\|_{H^{2}(\R)}  +
C_3(\sigma) h^{\sigma}\|u_0\|_{H^{\sigma}(\R)}
\]
for any  $u_{0}\in H^{\sigma}(\mathbb{R})$ with $\sigma>\frac{1}{2}$.
\end{thrm}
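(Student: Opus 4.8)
The plan is to carry out a von Neumann style convergence analysis entirely in the frequency domain, comparing the exact solution (\ref{vnaa09_a:f}) with the Fourier interpolant $\mathcal{S}U^n$ of the grid function (\ref{vnaa08:f}), and to split the total error into a high-frequency (aliasing) part supported on $|\xi| > \pi/h$ and an evolution part supported on $|\xi| \le \pi/h$. First I would subtract (\ref{vnaa08:f}) from (\ref{vnaa09_a:f}) and regroup the integrand so that the band $|\xi| \le \pi/h$ carries the difference $e^{\hat q(\xi)t_n}\hat u_0(\xi) - g(h\xi,\dt)^n \tilde u_0(\xi)$ while the tail $|\xi| > \pi/h$ carries only $e^{\hat q(\xi)t_n}\hat u_0(\xi)$, exactly as in (\ref{vnaa09a:f}). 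Applying Parseval's relation (\ref{vna05pd:f}) together with the stability property $\Re \hat q \le 0$ (guaranteed by Proposition~\ref{lemma01}) then bounds $\|u(\cdot,t_n)-\mathcal{S}U^n\|^2$ by the two integrals in (\ref{vnaa10:f}).

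The tail integral and the aliasing of $\tilde u_0$ against $\hat u_0$ are handled by the smoothness hypothesis $u_0 \in H^\sigma(\R)$ with $\sigma > 1/2$: these give the $h^{2\sigma}\|u_0\|_{H^\sigma}^2$ bounds (\ref{vnaa06a:f}) and (\ref{vnaa06b:f}), which after taking square roots produce the $C_3(\sigma)h^\sigma\|u_0\|_{H^\sigma}$ term. The remaining, and most delicate, contribution is the evolution error $\int_{|\xi|\le \pi/h}|e^{\hat q t_n}-g^n|^2 |\hat u_0|^2\,d\xi$. Here the key device is the telescoping bound $|e^{\hat q(\xi)\dt\,n}-g^n| \le n\,|e^{\hat q(\xi)\dt}-g|$ of (\ref{vnaa11:f}); this step is valid precisely because \emph{both} amplification factors are contractive, $|e^{\hat q \dt}|\le 1$ (from $\Re\hat q \le 0$) and $|g(h\xi,\dt)|\le 1$ (the assumed stability \eqref{f:stabilitycondition01}). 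Using $n\dt = t_n$ then converts the $n$-fold growth into a single factor of $t_n$.

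What remains is to estimate the one-step consistency error $e^{\dt\hat q}-g$. I would Taylor-expand $g$ and the exponential as in (\ref{vnaa02:f}), apply the Poisson summation formula to rewrite the discrete symbol $\tilde J$ in terms of the continuous symbol $\hat J$, and arrive at (\ref{vnaa05:f}). At this point the differential part contributes a clean $\mathcal{O}(\dt h^2|\xi|^4)$ truncation from the central-difference stencils, while the nonlocal part leaves the aliasing sum $\sum_{j\ne 0}\bigl(\hat J(\xi + 2\pi j/h)-\hat J(2\pi j/h)\bigr)$. I expect this nonlocal term to be the main obstacle: unlike the differential truncation, it carries no intrinsic power of $h$ and can only be controlled through the decay of $\hat J$ at high frequency. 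This is exactly where hypotheses \textbf{H1}, \textbf{H3}, \textbf{H5}, \textbf{H6} enter through Proposition~\ref{lemma04}, which bounds the sum by $2\hat J(\pi/h)$ and hence produces the kernel-dependent constant $C_1(h)=2\sqrt{2\pi}\,\hat J(\pi/h)$ appearing in (\ref{vnaa05b:f}).

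Finally I would assemble the pieces: insert (\ref{vnaa05b:f}) into the telescoped evolution integral, integrate against $|\hat u_0|^2$, and use that $|\xi|^4|\hat u_0(\xi)|^2$ is integrable over $\R$ (i.e. $u_0 \in H^2$) to obtain (\ref{vnaa06c:f}); then combine with the aliasing estimates and take a square root of the resulting sum of squares. Up to relabelling the constants, this yields the three-term bound of the theorem, with the $t_nC_1(h)\|u_0\|$ term recording the nonlocal (kernel) error, the $C_2h\|u_0\|_{H^2}$ term the differential truncation, and the $C_3(\sigma)h^\sigma\|u_0\|_{H^\sigma}$ term the interpolation/aliasing of the data.
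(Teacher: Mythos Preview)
Your proposal is correct and follows essentially the same route as the paper: the same decomposition (\ref{vnaa09a:f})--(\ref{vnaa10:f}), the same aliasing bounds (\ref{vnaa06a:f})--(\ref{vnaa06b:f}), the telescoping step (\ref{vnaa11:f}) justified by the two contraction properties, the one-step consistency estimate via Taylor expansion and Poisson summation leading to (\ref{vnaa05:f})--(\ref{vnaa05b:f}) through Proposition~\ref{lemma04}, and the final assembly (\ref{vnaa06c:f}). There is nothing substantive to add.
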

%
%
\subsection{The explicit implicit scheme (\ref{vna03aa:f})}
Using series expansion
\begin{align*}
e^{\dt \sqrt{2 \pi} \left(\hat J(\xi)-\hat J(0)-\frac{\xi^2+1}{\sqrt{2\pi}}+\frac{i\xi}{\sqrt{2\pi}} \right) }
&= 1+ \dt \sqrt{2 \pi} \left(\hat J(\xi)-\hat J(0)-\frac{\xi^2+1}{\sqrt{2\pi}}+\frac{i\xi}{\sqrt{2\pi}} \right) \\
 &+ \dt^2 \left(\sqrt{2 \pi} \left(\hat J(\xi)-\hat J(0)-\frac{\xi^2+1}{\sqrt{2\pi}}+\frac{i\xi}{\sqrt{2\pi}} \right)\right)^2
 + \mathcal{O}(\dt^3).
\end{align*}
Also
\begin{eqnarray*}
g(h\xi, \dt) &=&  \left( 1+\sqrt{2 \pi} \dt\left(\tilde J(\xi)-\tilde J(0)\right)+ \frac{\dt}{h}\left ( e^{ih\xi}-1 \right )\right)\left(1+\dt +4\frac{\dt}{h^2}\sin^2{\frac{h\xi}{2}}\right)^{-1}.
\end{eqnarray*}
Letting  $\dt\le \frac{1}{1 +\frac{4}{h^2}\sin^2{\frac{h\xi}{2}}}=\frac{h^2}{h^2 +4\sin^2{\frac{h\xi}{2}}}\le 1$ (since $\min_{\theta} \sin^2\theta=0$), considering $\frac{\dt}{h}$, and $\frac{\dt}{h^2}$ constants we have
\[
\left(1+\dt +4\frac{\dt}{h^2}\sin^2{\frac{h\xi}{2}}\right)^{-1} = 1- \left(\dt +4\frac{\dt}{h^2}\sin^2{\frac{h\xi}{2}}\right) + \left(\dt +4\frac{\dt}{h^2}\sin^2{\frac{h\xi}{2}}\right)^2-\cdots,
\]
and so
\[
\dt\tilde q(\xi) \times \left(1+\dt +4\frac{\dt}{h^2}\sin^2{\frac{h\xi}{2}}\right)^{-1}= \dt\tilde q(\xi)  \left(1- \left(\dt +4\frac{\dt}{h^2}\sin^2{\frac{h\xi}{2}}\right) \right) + \mathcal{O}(\dt^3),
\]
where $\tilde q(\xi)=\sqrt{2 \pi}\left(\tilde J(\xi)-\tilde J(0)\right)$. Also
 \[
\frac{\dt}{h}\left( e^{ih\xi}-1 \right) =  \frac{\dt}{h} \left( {ih\xi} -h^2\xi^2/2 -\frac{i h^3\xi^3}{6}\right) + \mathcal{O}(h^4 \xi^4),
\]
gives
\begin{eqnarray*}
&&\frac{\dt}{h}\left( e^{ih\xi}-1 \right) \times \left(1+\dt +4\frac{\dt}{h^2}\sin^2{\frac{h\xi}{2}}\right)^{-1}\\
&=& \frac{\dt}{h} \left( {ih\xi} -h^2\xi^2/2 -\frac{i h^3\xi^3}{6}\right)  \left(1- \left(\dt +4\frac{\dt}{h^2}\sin^2{\frac{h\xi}{2}}\right) \right)\\
&=& \frac{\dt}{h} \left( {ih\xi} -h^2\xi^2/2 -\frac{i h^3\xi^3}{6}\right) - \frac{\dt^2}{h} \left( {ih\xi} -h^2\xi^2/2 -\frac{i h^3\xi^3}{6}\right)\\
&& - \left( {ih\xi} -h^2\xi^2/2 -\frac{i h^3\xi^3}{6}\right) 4\frac{\dt^2}{h^3}\sin^2{\frac{h\xi}{2}}.
\end{eqnarray*}
Thus
\begin{eqnarray*}
g(h\xi, \dt) &=& 1 + \dt \left[ -1-\frac{4}{h^2} \sin^2 \frac{h\xi}{2} + \tilde q(\xi)   +\frac{1}{h} \left( {ih\xi} -h^2\xi^2/2 -\frac{i h^3\xi^3}{6}\right)  \right]\\
&& +\dt^2
\left[
\left(1 +4\frac{1}{h^2}\sin^2{\frac{h\xi}{2}}\right)^2 -
\tilde q(\xi) \left(1 +4\frac{1}{h^2}\sin^2{\frac{h\xi}{2}}\right)\right. \\
&&-\left.
\frac{1}{h} \left( {ih\xi} -h^2\xi^2/2 -\frac{i h^3\xi^3}{6}\right)
\left(1 +4\frac{1}{h^2}\sin^2{\frac{h\xi}{2}}\right)
\right] + \mathcal{O}(\dt^3),
\end{eqnarray*}
gives
\[
\left|e^{\dt \sqrt{2 \pi} \left(\hat J(\xi)-\hat J(0)-\frac{\xi^2+1}{\sqrt{2\pi}}+\frac{i\xi}{\sqrt{2\pi}} \right) } - g(h\xi, \dt)\right| \le \dt C_1(h)+C_2\dt h^2\xi^4+C_3\dt^2.
\]
Thus following similar procedure as of the accuracy analysis of the explicit Euler scheme  we estimate the accuracy of the scheme  (\ref{vna03aa:f}) by the following theorem.
\begin{thrm}\label{thrmm1:ff}
If  the kernel function $J(x)$ satisfies assumptions \textbf{H1}~-~\textbf{H6}
and (\ref{vna03aa:f})  is a  stable approximation for the IDE (\ref{vna01:f}),
  then there exist
constants $C_{1}(h),$ $C_2,$ $C_3$, $C_{4}(\sigma)$ such that
\[
\|u(x,t_n)-\mathcal{S}U^n (x)\|\le t_n C_1(h)\|u_0\| +  C_2 h \|u_0\|_{H^{2}(\R)}  + C_3 \dt \|u_0\|
+ C_4(\sigma) h^{\sigma}\|u_0\|_{H^{\sigma}(\R)},
\]
for any  $u_{0}\in H^{\sigma}(\mathbb{R})$ with $\sigma>\frac{1}{2}$.
\end{thrm}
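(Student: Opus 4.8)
The plan is to mirror the proof of Proposition~\ref{thrmm:ff} for the explicit scheme, since the explicit implicit scheme \eqref{vna03aa:f} admits exactly the same Fourier representation $\tilde\uh^n(\xi) = (g(h\xi,\dt))^n \tilde\uh^0(\xi)$ as in \eqref{vna08a:f}, only with the amplification factor $g$ replaced by the rational symbol derived for this scheme. First I would write the exact solution through the inverse continuous Fourier transform \eqref{vnaa09_a:f} and the Fourier interpolant $\mathcal{S}U^n$ of the mesh function exactly as in \eqref{vnaa08:f}, and then split $\|u(\cdot,t_n) - \mathcal{S}U^n\|^2$ by Parseval's relation \eqref{vna05pd:f} into a low-frequency part over $|\xi|\le \pi/h$ and a high-frequency tail over $|\xi| > \pi/h$, precisely as in \eqref{vnaa10:f}.

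The high-frequency tail and the aliasing contribution depend only on the smoothness of $u_0$ and on the truncation of the frequency domain, not on the time-stepping scheme, so the estimates \eqref{vnaa06a:f} and \eqref{vnaa06b:f} carry over verbatim and produce the term $C_4(\sigma)h^\sigma\|u_0\|_{H^\sigma(\R)}$. For the low-frequency evolution error I would invoke stability twice: the exact symbol satisfies $\Re\hat q \le 0$ by Proposition~\ref{lemma01}, and the discrete symbol satisfies $|g(h\xi,\dt)|\le 1$ under the stability condition \eqref{f:stabilitycondition02} that is assumed in the hypothesis. These two facts give the telescoping amplification bound $|e^{\hat q t_n} - g^n| \le n\,|e^{\hat q\dt} - g|$ exactly as in \eqref{vnaa11:f}. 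I would then insert the one-step consistency estimate obtained just above the statement, namely $|e^{\dt\hat q} - g| \le \dt C_1(h) + C_2\dt h^2\xi^4 + C_3\dt^2$, square it, integrate against $|\hat u_0|^2$, and use $n\dt = t_n$ to absorb the step-count factors. The three summands then yield respectively $t_n C_1(h)\|u_0\|$, $C_2 h\|u_0\|_{H^2(\R)}$, and the new $C_3\dt\|u_0\|$ term that distinguishes this estimate from Proposition~\ref{thrmm:ff}; note that the $\mathcal{O}(\dt^3)$ piece carried by $g$, once multiplied by $n\sim t_n/\dt$, is $\mathcal{O}(\dt^2)$ and hence subdominant.

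The main obstacle is the careful, $\xi$-uniform justification of the Neumann expansion of $(1 + \dt + 4\tfrac{\dt}{h^2}\sin^2\tfrac{h\xi}{2})^{-1}$ used to derive that one-step estimate. The expansion converges only when $\dt(1 + \tfrac{4}{h^2}\sin^2\tfrac{h\xi}{2}) < 1$, and the apparently singular factor $h^{-2}\sin^2(h\xi/2)$ must be controlled across all $|\xi|\le\pi/h$. The key is the elementary bound $\tfrac{4}{h^2}\sin^2\tfrac{h\xi}{2}\le \xi^2$, which converts these $h^{-2}$ factors into powers of $\xi$ that are then absorbed by the Sobolev norm $\|u_0\|_{H^2(\R)}$ after integration. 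Verifying this uniformity, checking that the expansion remainder does not spoil the three displayed orders, and doing the bookkeeping of which powers of $h$ and $\xi$ land on which Sobolev norm, is the only genuinely technical step; the remainder of the argument is structurally identical to the explicit case.
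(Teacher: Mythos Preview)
Your proposal is correct and follows essentially the same route as the paper: the paper derives the one-step consistency bound $|e^{\dt\hat q}-g|\le \dt C_1(h)+C_2\dt h^2\xi^4+C_3\dt^2$ via the Neumann expansion of the rational symbol and then simply states that the remainder of the argument proceeds exactly as for the explicit scheme. Your identification of the Neumann-expansion convergence as the delicate point is apt; the paper handles it only formally by declaring $\dt/h$ and $\dt/h^2$ to be held constant, whereas your suggested bound $\tfrac{4}{h^2}\sin^2\tfrac{h\xi}{2}\le\xi^2$ is a cleaner way to push those factors into the Sobolev norm.
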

We compute error in such approximations that have been presented above considering various choices of the kernel function and the initial function. We present errors estimated by Theorem~\ref{thrmm:ff}  and Theorem~\ref{thrmm1:ff} in
Figure~\ref{f:fig01_err}. From this computation we observe the supremacy of the explicit implicit scheme as well as the importance of the choices of the initial function $u_0(x)$ and the kernel function $J (x)$. Here it can easily be noticed that smooth $J(x)$ and $u_0(x)$ give better accuracy and that justifies the
Theorem~\ref{thrmm:ff}  and the Theorem~\ref{thrmm1:ff}.
\begin{figure}[here]
\begin{center}
\includegraphics[height=0.35\textwidth, width=.49\textwidth]{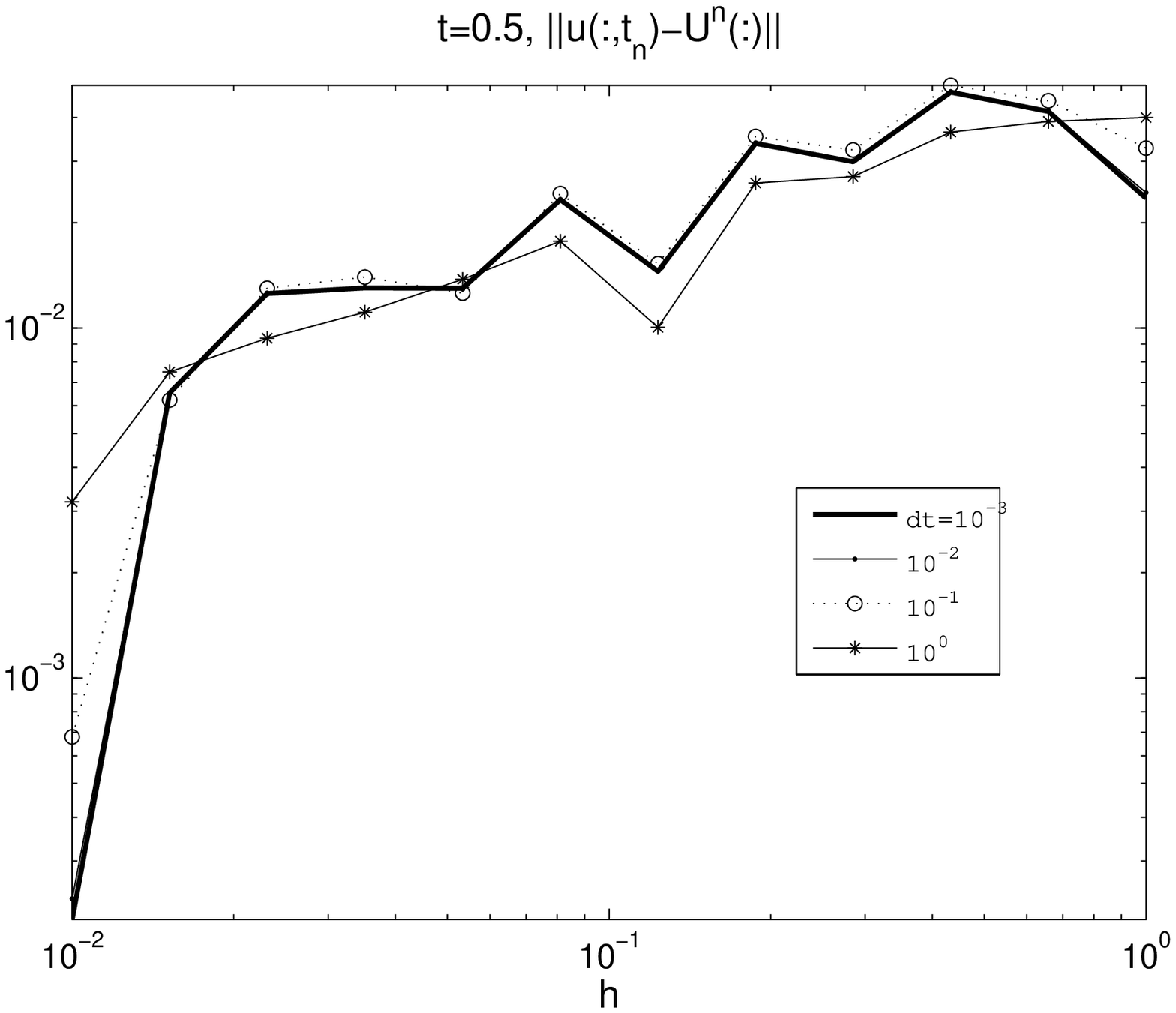}
\includegraphics[height=0.35\textwidth, width=.49\textwidth]{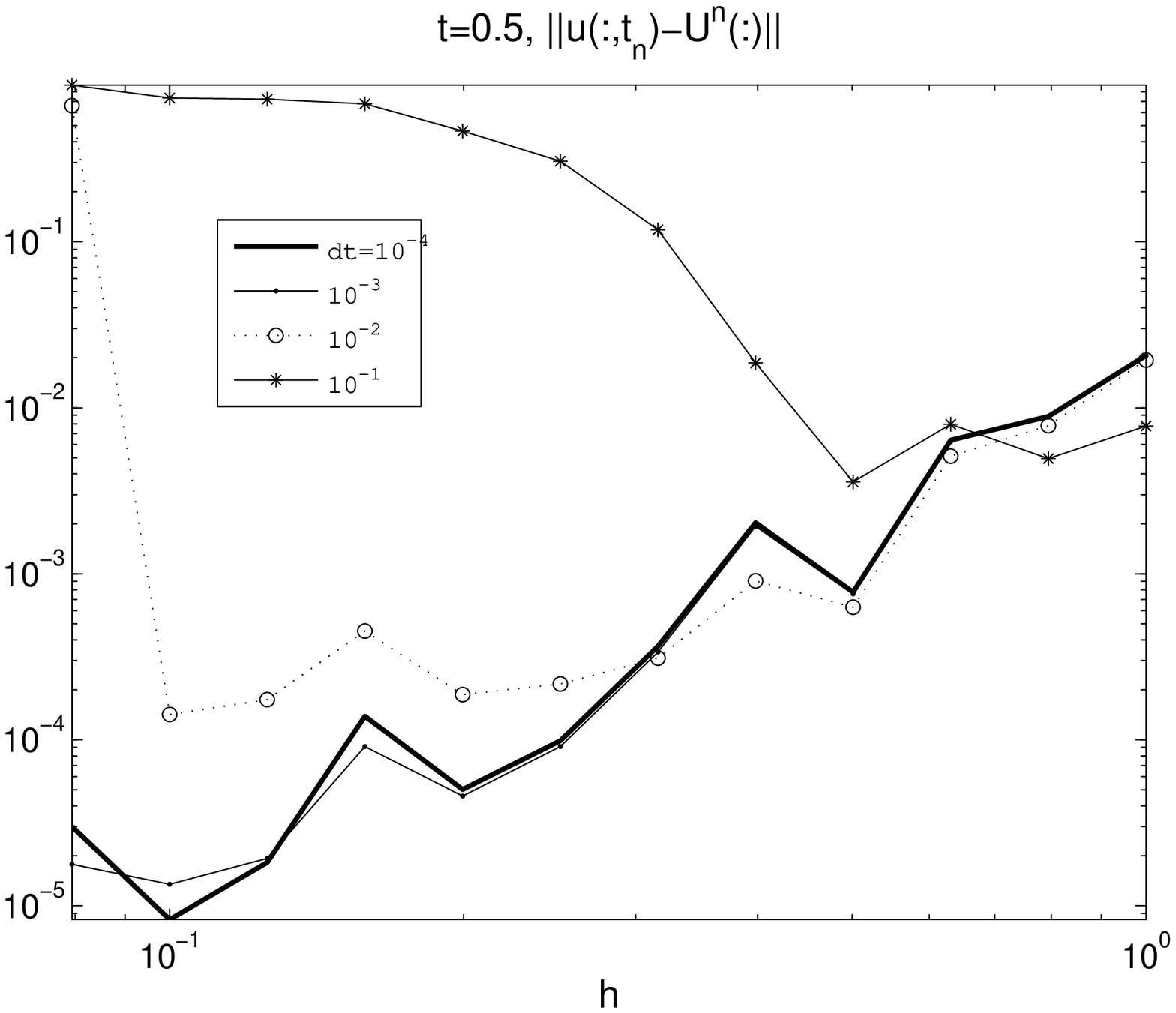}
\includegraphics[height=0.35\textwidth, width=.49\textwidth]{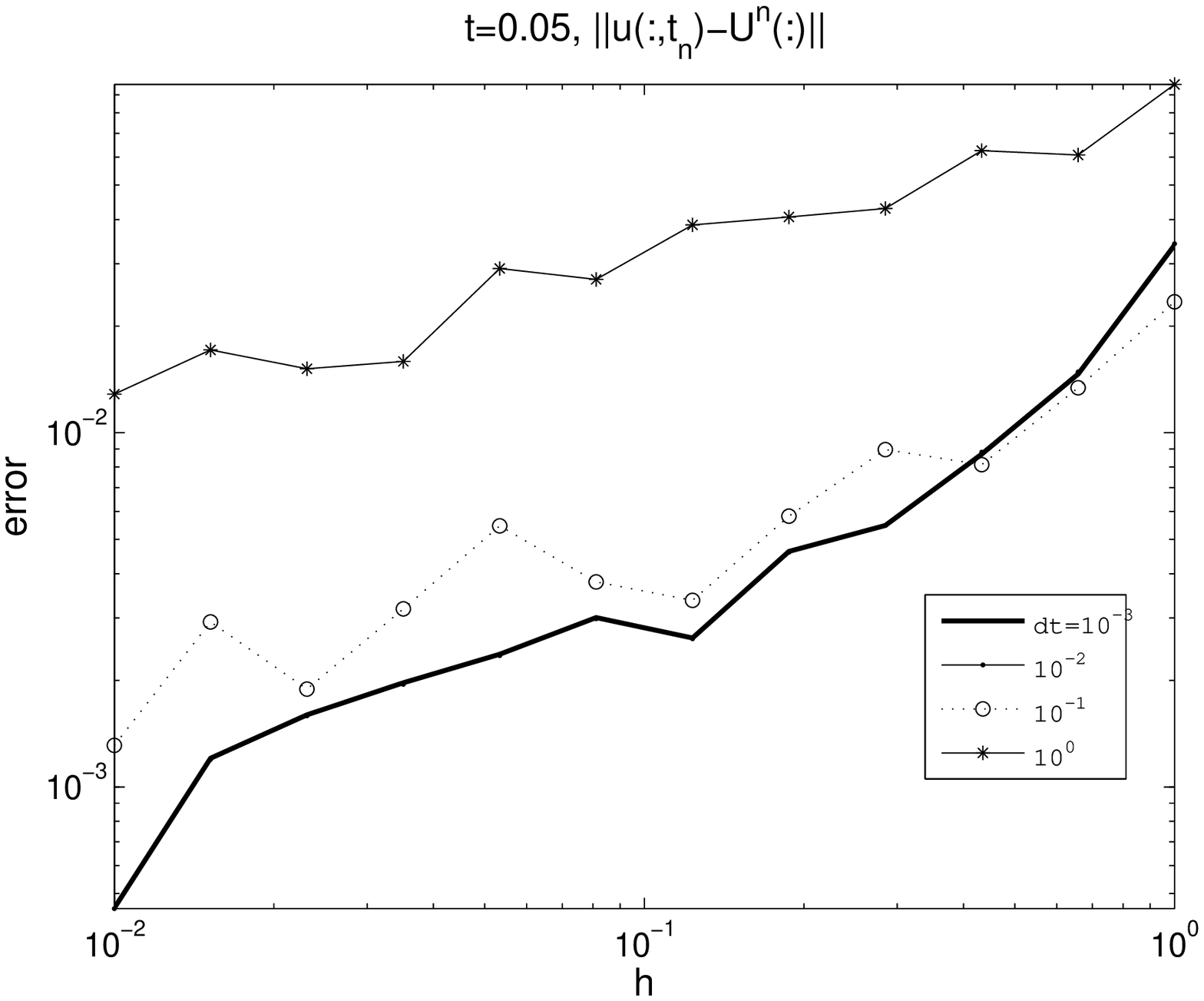}
\includegraphics[height=0.35\textwidth, width=.49\textwidth]{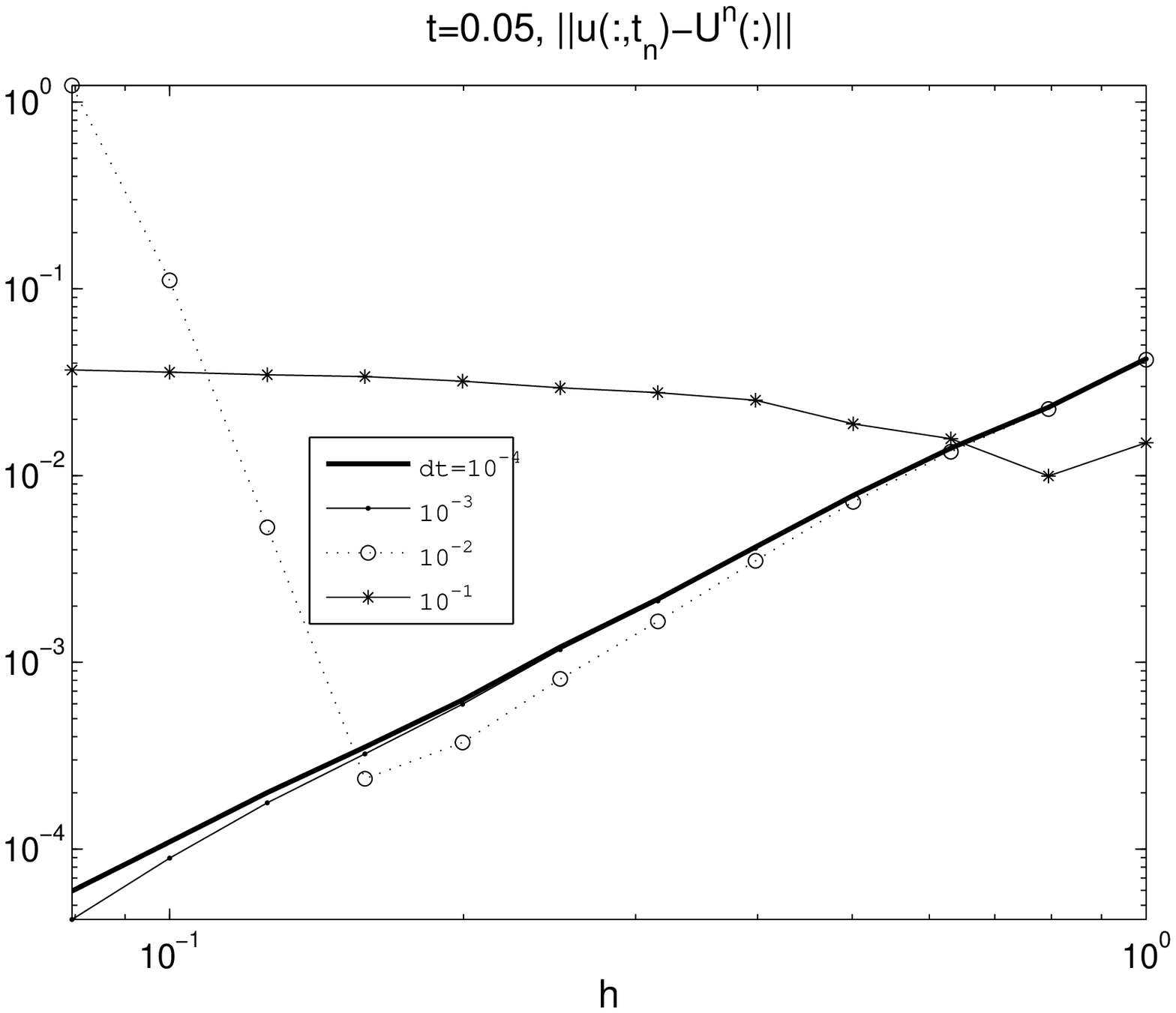}
\caption{Here we present the error $\|u(:, t_n) - U^n(:)\|$ estimated by the explicit scheme (right two) and the explicit-implicit (left two). Here in the  bottom Figures we consider $J(x)=e^{-|x|}$, $u_0(x)=e^{-|x|}$; in the top Figures we consider $u_0(x)=\sqrt{\frac{10}{\pi}}e^{-10x^2}$,    $J(x)=\sqrt{\frac{1}{\pi}}e^{-x^2}$
}
\label{f:fig01_err}
\end{center}
\end{figure}
\section{Accuracy of the semidiscrete approximation}\label{f:semidiscretesection}
%
Here we study the accuracy of the scheme (\ref{vna02:f}). Applying the discrete Fourier transform on (\ref{vna02:f})
\begin{equation}\label{asa01:f}
\tilde U_{t}(\xi,t)=\tilde q(\xi) \tilde U(\xi,t)
\end{equation}
where  $\xi\in \left[-\frac{\pi}{h},\frac{\pi}{h}\right]$ and
$$\tilde q(\xi)= \sqrt{2 \pi} \left( \tilde J(\xi)-\tilde J(0)\right) -\frac{4}{h^2}\sin^2{\frac{h\xi}{2}}+
\frac{i}{h}\sin({h\xi})-1$$
and thus 
\begin{equation}\label{asa02:f}
\tilde U(\xi,t)=e^{\tilde q(\xi)t} \tilde U_{0}(\xi).
\end{equation}
Applying the inverse Fourier transform to (\ref{asa02:f}) \begin{equation}\label{asa02a:f}
U_{m}(t)=\frac{1}{\sqrt{2\pi}} \int_{-\frac{\pi}{h}}^{\frac{\pi}{h}}
e^{imh\xi} e^{\tilde q(\xi)t}\tilde U_{0}(\xi) d\xi.
\end{equation}
We interpolate $U_{m}(t)$ defined in (\ref{asa02a:f}) by~\cite{Str}
\[
\mathcal{S} U(x,t)=\frac{1}{\sqrt{2\pi}} \int_{-\frac{\pi}{h}}^{\frac{\pi}{h}}
e^{ix\xi} e^{\tilde q(\xi)t}\tilde U_{0}(\xi) d\xi.
\]
Similar to the Theorem~\ref{thrmm:ff} (using (\ref{vnaa09_a:f})),
\[
u(x,t)-\mathcal{S} U(x,t)
=\frac{1}{\sqrt{2\pi}} \int_{|\xi|\le \frac{\pi}{h}}
e^{ix\xi}\left( e^{\hat q(\xi)t} \hat u_{0}(\xi)-e^{\tilde q(\xi)t}
\tilde U_{0}(\xi)\right)d\xi
\]
\begin{equation}\label{asa03:f}
+\frac{1}{\sqrt{2\pi}} \int_{|\xi|\ge \frac{\pi}{h}}
e^{ix\xi} e^{\hat q(\xi)t}\hat u_{0}(\xi)d\xi.
\end{equation}

\begin{thrm}
If
$J$, and  $\hat J$ satisfy the assumptions \textbf{H1}~-~\textbf{H6}
 and $u_{0}\in H^{\sigma}(\mathbb{R})$ with $\sigma>\frac{1}{2},$ then there exist
constants $C_{1}(h),$ $C_{3}(\sigma)$ such that
\[
\|u(x,t)-\mathcal{S} U (t)\|\le t C_1 (h) \|u_0\| +C_2 h \|u_0\|_{H^{2}(\R)}^2  +
C_3(\sigma) h^{\sigma}\|u_0\|_{H^{\sigma}(\R)},
\]
where (\ref{vna02:f}) is a semidiscrete approximation to  the IDE (\ref{vna01:f}).
\end{thrm}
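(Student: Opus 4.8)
The plan is to follow the template of Theorem~\ref{thrmm:ff}, exploiting the simplification that the semidiscrete scheme evolves \emph{exactly} in time: its Fourier symbol is the genuine exponential $e^{\tilde q(\xi)t}$ rather than a polynomial amplification factor $g^n$, so no time-stepping error enters and the argument is cleaner. Starting from the splitting \eqref{asa03:f} and applying Parseval's relation \eqref{vna05pd:f}, I would bound $\|u-\mathcal{S}U\|^2$ by the sum of a principal integral over $|\xi|\le\frac{\pi}{h}$ and a tail integral over $|\xi|>\frac{\pi}{h}$. The tail is treated exactly as in \eqref{vnaa06b:f}: since $\Re\hat q(\xi)\le 0$ by Proposition~\ref{lemma01} we have $|e^{\hat q(\xi)t}|\le 1$, and the Sobolev decay of $\hat u_0$ gives a contribution of order $h^{2\sigma}\|u_0\|_{H^\sigma(\R)}^2$.

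For the principal integral I would insert the intermediate quantity $e^{\tilde q(\xi)t}\hat u_0(\xi)$ and write
\[
e^{\hat q t}\hat u_0 - e^{\tilde q t}\tilde U_0 = \left(e^{\hat q t}-e^{\tilde q t}\right)\hat u_0 + e^{\tilde q t}\left(\hat u_0-\tilde U_0\right).
\]
The second (initial-data aliasing) term is controlled using $|e^{\tilde q t}|\le 1$, which again holds because $\Re\tilde q\le 0$ by Proposition~\ref{lemma01}, together with the Poisson summation identity $\hat u_0(\xi)-\tilde U_0(\xi)=-\sum_{j\ne 0}\hat u_0\!\left(\xi+\frac{2\pi j}{h}\right)$; this reproduces the estimate \eqref{vnaa06a:f}, of order $h^{2\sigma}\|u_0\|_{H^\sigma(\R)}^2$.

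The heart of the argument is the symbol-error term $\left(e^{\hat q t}-e^{\tilde q t}\right)\hat u_0$. First I would establish the linear-in-$t$ bound $\left|e^{\hat q t}-e^{\tilde q t}\right|\le t\,|\hat q-\tilde q|$, obtained from
\[
e^{\hat q t}-e^{\tilde q t}=t(\hat q-\tilde q)\int_0^1 e^{\left(s\hat q+(1-s)\tilde q\right)t}\,ds
\]
and the observation that $\Re\!\left(s\hat q+(1-s)\tilde q\right)\le 0$ for $s\in[0,1]$, since both $\Re\hat q\le 0$ and $\Re\tilde q\le 0$; it is precisely this factor $t$ that produces the $t\,C_1(h)$ prefactor in the theorem. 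Next I would estimate $|\hat q(\xi)-\tilde q(\xi)|$ on $|\xi|\le\frac{\pi}{h}$ exactly as in \eqref{vnaa05b:f}: consistency of the centered differences gives $-\xi^2+\frac{4}{h^2}\sin^2\frac{h\xi}{2}=\mathcal{O}(h^2\xi^4)$ and $i\xi-\frac{i}{h}\sin(h\xi)=\mathcal{O}(h^2\xi^3)$, while the kernel discrepancy equals the aliasing sum $-\sqrt{2\pi}\sum_{j\ne 0}\left(\hat J(\xi+\frac{2\pi j}{h})-\hat J(\frac{2\pi j}{h})\right)$, bounded by $C_1(h):=2\sqrt{2\pi}\,\hat J(\frac{\pi}{h})$ via Proposition~\ref{lemma04} under H1--H6. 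Integrating $t^2|\hat q-\tilde q|^2|\hat u_0|^2$ then yields the $t\,C_1(h)\|u_0\|$ and $C_2 h\|u_0\|_{H^2(\R)}$ contributions. Collecting the three pieces and taking square roots gives the claim.

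The main obstacle will be the uniform control of $|\hat q-\tilde q|$ on $|\xi|\le\frac{\pi}{h}$, and in particular the kernel-aliasing bound: this is where the monotonicity and decay hypotheses H4 and H6 on $J$ and $\hat J$ are indispensable through Proposition~\ref{lemma04}, and where one must verify that $C_1(h)=2\sqrt{2\pi}\,\hat J(\frac{\pi}{h})\to 0$ as $h\to 0$ using $\hat J\in L_2(\R)$. The only other delicate point is keeping the bookkeeping of the powers of $h$ and $\xi$ consistent, so that the polynomial-in-$\xi$ remainder integrates against $|\hat u_0|^2$ into the advertised Sobolev norms.
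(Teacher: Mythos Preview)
Your proposal is correct and follows essentially the same route as the paper's own proof: the same splitting into tail and principal integrals via Parseval, the same add-and-subtract decomposition of the principal integrand into a symbol-error piece $(e^{\hat q t}-e^{\tilde q t})\hat u_0$ and an initial-data aliasing piece handled by \eqref{vnaa06a:f}, the same appeal to Proposition~\ref{lemma04} for the kernel-aliasing bound, and the same Taylor-consistency estimates for the difference quotients. The only cosmetic difference is that you justify $|e^{\hat q t}-e^{\tilde q t}|\le t\,|\hat q-\tilde q|$ via the interpolation integral $\int_0^1 e^{(s\hat q+(1-s)\tilde q)t}\,ds$, whereas the paper simply asserts the inequality from $\Re\hat q,\Re\tilde q\le 0$; your version is a touch more explicit but amounts to the same thing.
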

\begin{proof}
We have
\begin{eqnarray}\label{asa04:f}
\|u(\cdot,t)-\mathcal{S} U(\cdot,t)\|^2&=&
\int_{-\infty}^{\infty} {\left|u(\cdot,t)-\mathcal{S}U(\cdot,t)\right|}^2 dx
\nonumber\\
&\le&\frac{1}{\sqrt{2\pi}} \int_{|\xi|\le \frac{\pi}{h}}
\left| e^{\hat q(\xi)t} \hat u_{0}(\xi)-e^{\tilde q(\xi)t}
\tilde U_{0}(\xi)\right| ^2 d\xi \nonumber\\
&& +\frac{1}{\sqrt{2\pi}} \int_{|\xi|\ge \frac{\pi}{h}}
 \left |\hat u_{0}(\xi) \right |^2 d\xi
\end{eqnarray}
since by Lemma~\ref{lemma01} $Real(\hat q) \le 0.$ Similar to
the analysis of the full discrete approximation
the first part
of the right-hand side of (\ref{asa04:f}) can be
written as
\begin{eqnarray*}
&&
\frac{1}{\sqrt{2\pi}} \int_{|\xi|\le \frac{\pi}{h}}
\left| e^{\hat q(\xi)t} \hat u_{0}(\xi)-e^{\tilde q(\xi)t}
\tilde U_{0}(\xi)\right| ^2 d\xi  \\
&\le& \frac{2}{\sqrt{2\pi}} \int_{|\xi|\le \frac{\pi}{h}}
\left| e^{\hat q(\xi)t} -e^{\tilde q(\xi)t}
\right|^2|\hat u_{0}(\xi)| ^2 d\xi 
+\frac{2}{\sqrt{2\pi}} \int_{|\xi|\le \frac{\pi}{h}}\left| \sum_{j\ne 0}
 \hat u_{0} \left(\xi+\frac{2\pi j}{h} \right) \right|^2 d\xi.
\end{eqnarray*}
We have
$$
\left|e^{\hat q(\xi)t}-e^{\tilde q(\xi)t} \right|
\le t|\hat q(\xi)-\tilde q(\xi)|
$$
since $real(\tilde q(\xi))\le 0 \quad \mbox{and}\quad real(\hat q(\xi))\le 0.$
Now
\begin{eqnarray*}
\hat q(\xi)-\tilde q(\xi)
&=& \sqrt{2\pi}\sum_{j=-\infty,j \ne 0}^{\infty}\left(\hat J \left(\xi+\frac{2\pi j}{h} \right)
-\hat J\left(\frac{2\pi j}{h}\right)\right)\\
&&+ \left( \xi^2 - \frac{4}{h^2}\sin^2(\frac{h\xi}{2}) \right)
+\left(-i\xi+\frac{i\sin(h\xi)}{h}\right)\\
&=& \sqrt{2\pi}\sum_{j=-\infty,j \ne 0}^{\infty}\left(\hat J \left(\xi+\frac{2\pi j}{h} \right)
-\hat J\left(\frac{2\pi j}{h}\right)\right)
+ C_2 h^2\xi^4 + \mathcal{O}((h\xi)^5).
\end{eqnarray*}
Thus
\begin{eqnarray*}
|\hat q(\xi)-\tilde q(\xi)|
&\le & C(h) + C_2 h^2\xi^4,
\end{eqnarray*}
and  $ C(h)= 2 \hat J(\frac{\pi}{h}) $ as $h\rightarrow 0$.
Now
\begin{align*}
\int_{|\xi|\le \frac{\pi}{h}}
\left| e^{\hat q(\xi)t} -e^{\tilde q(\xi)t}
\right|^2|\hat u_{0}(\xi)| ^2 d\xi
&\le t^2\int_{|\xi|\le\frac{\pi}{h}}\left|\hat q(\xi)-\tilde q(\xi) \right|^2
|\hat u_{0}(\xi)|^2d\xi \\
&\le t^2 C(h)^2 \| u_0\|^{2}
+ C^2 h^2\| u_0\|_
{H^2(\R)}^2,
\end{align*}%
%
gives
\begin{equation}\label{asa05:f}
\int_{|\xi|\le \frac{\pi}{h}}\left| e^{\hat q(\xi)t}-e^{\tilde q(\xi)t}
\right|^2 |\hat u_0(\xi)|^2 d\xi
\le t^2 C^2(h) \| u_0\|^{2} + C_2 t^2 h^2 \|u_0\|_{H^{2}(\R)}^2.
\end{equation}
Thus applying (\ref{vnaa06a:f}),(\ref{vnaa06b:f}) and (\ref{asa05:f}),
(\ref{asa04:f}) takes the form
\begin{equation}
\|u(x,t)-\mathcal{S} U (t)\|\le t C_1 (h) \|u_0\| +C_2 h \|u_0\|_{H^{2}(\R)}^2  + C_3(\sigma) h^{\sigma}\|u_0\|_{H^{\sigma}(\R)}
\end{equation}
for some $C_1$, $C_2$, $C_3$ for all $u_0 \in H^{\sigma}(\mathbb{R})$ with $\sigma>\frac{1}{2}.$
\end{proof}
\section{Summary and conclusions}\label{section03}
In this study, we consider a linear partial integro-differential operator (PIDO) that comes in modeling financial engineering problems as well as in modeling various scientific problems. We study a few  finite difference schemes (FDSs) for  European style options with a jump-diffusion term (the PIDO). In the first part of the study we introduce several preconditioned linear system solvers for the full discrete equivalent of the model. We observe that all the preconditioned solvers are very efficient, and the multigrid solver is way better than the wavelet diagonal preconditioned solver and the Fourier sine preconditioned solvers. In fact, a one $v-$cycled Multigrid solver is several times faster than the other two. The implementation costs for the sine and the wavelet preconditioning are relatively higher than that of the multigrid technique. So we conclude that a multigrid method can be used to speed up the computation of the finite dimensional (full discrete) PIDE model.  Here we also conclude that the explicit implicit scheme outperforms the implicit scheme in terms of computational costs.

Here, in the second part of this study, we analyze the stability and the accuracy of two different finite difference schemes. While analyzing the stability and the accuracy of the finite difference schemes (an explicit scheme as well as an explicit implicit scheme) we notice that the schemes are conditionally stable (under some reasonable restrictions imposed on the kernel function). The explicit implicit scheme is faster than that of the explicit scheme as well as the implicit scheme, which agrees with the properties of the time and the space discretizations of the PIDE we consider in this study. We establish some bounds of the  error in such full discrete as well as semi-discrete schemes.

Here we analyze the model in one space dimension only.   Preconditioners can be employed  to speed up the computational process for the full discrete model, specially for two and three space dimensional domains as well as preconditioned solvers along with higher order multi-step schemes may be better options to think of, and that leaves as future research directions.
\bibliography{ref_black_scholes}{}
\bibliographystyle{plain}
\end {document}